\newtheorem{theorem}{Theorem}[section] 
\newtheorem{lemma}[theorem]{Lemma}
\newtheorem{proposition}[theorem]{Proposition}
\newtheorem{conjecture}[theorem]{Conjecture}
\theoremstyle{definition}
\newtheorem{corollary}[theorem]{Corollary}
\newtheorem{remark}{Remark}
\numberwithin{equation}{section}
\let\svqty\qty
\let\qty\svqty
\newcommand{\hpi}{\uppi} % pi for the hopf fibration
\newcommand{\ttau}{\uptau} % tau for the twistor fibration
\newcommand*{\reals}{\mathbb R}
\newcommand*{\complexs}{\mathbb{C}}
\newcommand*{\set}[1]{\l\{ #1 \r\}} 
\renewcommand{\l}{\left}
\renewcommand{\r}{\right}
\renewcommand*{\epsilon}{\varepsilon}
\renewcommand*{\phi}{\varphi}
\renewcommand*{\tilde}{\widetilde}
\newcommand{\CP}{{\complexs P}}
\newcommand{\sphere}[1]{\mathbb{S}^{#1}}
\newcommand{\quaternions}{\mathbb{H}}
\DeclareMathOperator{\diag}{diag}
\newcommand{\lift}[1]{\widetilde{#1}}
\newcommand{\Jo}{J_\circ}
\newcommand{\Dtwo}{\mathcal{D}_1^2}
\newcommand{\Dfour}{\mathcal{D}_2^4}
\newcommand{\FSm}{g_{\circ}}%\hat{g}}
\newcommand{\FSnabla}{\nabla^{\circ}}%\widehat{\nabla}}
\DeclareMathOperator{\SU}{SU}
\DeclareMathOperator{\U}{U}
\DeclareMathOperator{\Sp}{Sp}
\newcommand\su{\mathfrak{su}}
\newcommand\so{\mathfrak{so}}
\renewcommand\u{\mathfrak{u}}
\renewcommand\sp{\mathfrak{sp}}
\newcommand\m{\mathfrak{m}}
\newcommand\h{\mathfrak{h}}
\DeclareMathOperator{\Ad}{Ad}
\DeclareMathOperator{\Lie}{Lie}
\DeclareMathOperator{\Span}{Span}
\title[Classification results for totally real surfaces of nearly Kähler $\CP^3$]{Classification results for totally real surfaces of nearly Kähler $\CP^3$}
\author{Michaël Liefsoens \and Hui Ma \and Luc Vrancken}
\address{M. Liefsoens, KU\ Leuven, Department of Mathematics, Celestij\-nenlaan 200B -- Box 2400, 3001 Leuven, Belgium}
\email{michael.liefsoens@kuleuven.be}
\address{H.~Ma, Department of Mathematical Sciences, Tsinghua University, Beijing 100084, P. R. China}
\email{ma-h@tsinghua.edu.cn}
\address{L.~Vrancken, Ceramaths,
Universit\'e Polytechnique Hauts-de-France, F-59313 Valenciennes, France;
Ceramaths, INSA Hauts de France, F 59313 Valenciennes, France,
Department of Mathematics, KU Leuven, Celestijnenlaan 200B, Box 2400, BE-3001 Leuven, Belgium.}
\email{luc.vrancken@uphf.fr}
\thanks{M. Liefsoens is supported by the Research Foundation Flanders (FWO) with project 11PG324N. H.~Ma is partially supported by the National Natural Science Foundation of China (Grant No. 12471048).}
\subjclass[2020]{53C42, 53C30, 53C15}
\keywords{totally real surfaces, nearly Kähler manifold}
\begin{document}

\begin{abstract}
	Totally real surfaces in the nearly Kähler $\CP^3$ are investigated and are completely classified under various additional assumptions, resulting in multiple new examples. Among others, the classification includes totally real surfaces that are extrinsically homogeneous; or minimal; or totally umbilical; or Codazzi-like (including parallel and non-parallel examples). \vspace{-.5cm}
\end{abstract}

\maketitle

%%%%%%%%%%%%%%%%%%%%%%
\section{Introduction}
%%%%%%%%%%%%%%%%%%%%%%

In 1959, Tachibana \cite{tachibana_almost-analytic_1959} coined the definition of a \emph{nearly Kähler} manifold: a Riemannian manifold $(M,g)$ with a compatible almost complex structure $J$ such that the covariant derivative of $J$ is skew-symmetric. If $\nabla J$ is moreover non-degenerate, $(M, g, J)$ is called a \emph{strict nearly Kähler} manifold. These manifolds are a natural and interesting generalisation of Kähler manifolds, and also have applications in physics, see \cite{atiyah2002}. In the 1960s, the concept of a nearly Kähler manifold was independently introduced and studied by Gray \cite{gray_nearly_1970,gray_examples_1966,gray_structure_1976}. He also chose the name of this class of manifolds.

There are several results that highlight the central role of \emph{six}-dimensional nearly Kähler manifolds in the general theory of nearly Kähler manifolds. Firstly, six-dimensional nearly Kähler manifolds are the lowest-dimensional non-Kähler examples. Secondly, Nagy's structure theorem \cite{nagy_nearly_2002} states that any strict, complete, and simply connected nearly Kähler manifold can be decomposed into a Riemannian product with factors of six-dimensional nearly Kähler manifolds, homogeneous nearly Kähler spaces, or twistor spaces over quaternionic manifolds with positive scalar curvature.

Butruille \cite{butruille_classification_2005} classified all homogeneous six-dimensional strictly nearly Kähler manifolds, showing that there are only four such manifolds: the round six-sphere $\sphere{6}$, the complex projective space $\CP^3$ (not with its Fubini-Study metric), the product manifold $\sphere{3}\times\sphere{3}$ (not with its product metric), and the flag manifold $F_{1,2}$, which consists of full flags in $\complexs^3$.
In 2017, Foscolo and Haskins \cite{foscolo_new_2017} made a significant breakthrough by constructing the first examples of complete inhomogeneous six-dimensional nearly Kähler manifolds, with as differentiable manifolds $\sphere{6}$ and $\sphere{3}\times\sphere{3}$. 
Notably, nearly Kähler $\CP^3$ and the flag manifold are unique in that they belong to all three categories in Nagy's classification.
However, among these, nearly Kähler $\CP^3$ and its submanifold theory remain the least studied.

In order to understand these spaces it is important to study their submanifolds, and in particular those that are special with respect to the nearly Kähler structure. 
There are two natural classes of such submanifolds which we can consider, namely, almost complex submanifolds, whose tangent space is preserved by $J$, and totally real submanifolds, whose tangent space is mapped to the normal space by $J$.
Almost complex submanifolds of six-dimensional homogeneous strict nearly Kähler spaces necessarily are two-dimensional and have been widely researched: in the nearly Kähler $\sphere{6}$ by Bryant \cite{bryant1982}, in $\sphere{3}\times\sphere{3}$ by Dioos \cite{dioos2018a}, in $\CP^3$ by Aslan \cite{aslan2022a}, and finally in $F_{1,2}$ by Cwiklinski and the third author \cite{cwiklinski2022}.
In contrast, totally real submanifolds in these ambient spaces (which have dimension at most three) are less studied, in particular those of dimension two. In the sphere $\sphere{6}$, there are some results: in \cite{bolton1994,bolton1997}, the authors classified totally real minimal surfaces.
Totally real 3-dimensional manifolds are called Lagrangian and must automatically be minimal. These are well studied in (among others) \cite{enoyoshi_lagrangian_2020,hu_ricci_2020,hu_rigidity_2019} for $\sphere{6}$, \cite{bektas_lagrangian_2019,dioos_lagrangian_2018} for the nearly Kähler $\sphere{3}\times\sphere{3}$, \cite{aslan_special_2023,liefsoens2024,storm_note_2020} for the nearly Kähler $\CP^3$, and \cite{storm_note_2020} for $F_{1,2}$. 

In this paper, we initiate the study of totally real surfaces of the nearly Kähler $\CP^3$. In \Cref{sec:description_global}, we introduce the nearly Kähler structure on $\CP^3$, which also involves the standard Kähler structure $\Jo$. In \Cref{sec:frame}, we use the almost product structure to distinguish different types of totally real surfaces and introduce a special frame to study them. In \Cref{sec:examples}, we give two main types of examples. The first main class of examples consists of several subclasses of flat tori, as well as one immersion of a sphere. We will show that these are the only extrinsically homogeneous totally real surfaces, see \Cref{thm:EH}. The second main class of examples are products of a Legendre curve in $\sphere{3}$ with (its generalisation  for the quaternions of) a Legendre curve in $\sphere{7}$ (as introduced in \Cref{sec:legendre_S7}). These are all the surfaces for which the covariant derivative of the second fundamental form is a Codazzi tensor, see \Cref{thm:classification_CL_TR}. This class includes all the parallel examples (those with $\nabla h = 0$), and they are classified completely in \Cref{thm:classification_parallel_1}. Finally, we characterise various elements in these classes by properties of either $\Jo$ or of the second fundamental form. In particular, we classify the totally real surfaces that are additionally almost complex for Kähler $\CP^3$ in \Cref{thm:classification_AC_KCP3}, or totally umbilical in \Cref{thm:classification_totally_umbilical}, or totally geodesic for Kähler $\CP^3$ in \Cref{thm:tg_in_KCP3}. Finally, we obtain a characterisation of the flat minimal Lagrangian immersion in the Kähler $\CP^2 \subset \CP^3$ and the Clifford torus in $\reals P^3 \subset \CP^3$.

%%%%%%%%%%%%%%%%%
\section{Description of the nearly Kähler \texorpdfstring{$\CP^3$}{ℂP³}}\label{sec:description_global}
%%%%%%%%%%%%%%%%%
One way of describing nearly Kähler $\CP^3$ is as a twistor space over the quaternionic $\mathbb{H}P^1 \cong \sphere{4}$. Being the domain of a fibration, this description does not allow for an easy description of submanifolds. We recall two other descriptions. One as a Riemannian homogeneous space, which is sometimes very useful, and one as the base space of the Hopf fibration coming from $\sphere{7}$, which is the main tool we use.

%%%%%%%%%%%%%%%%%
\subsection{Nearly Kähler \texorpdfstring{$\CP^3$}{ℂP³} via the Hopf fibration}\label{sec:description}
%%%%%%%%%%%%%%%%%

Recall the Hopf fibration $\hpi: \sphere{7} \to \CP^3$, which is a Riemannian submersion when $\sphere{7}$ is equipped with its round metric and $\CP^3$ with its Fubini-Study metric $\FSm$. There is an almost complex structure $\Jo$ on $\CP^3$ inherited from multiplication by $i$ in $\complexs^4$. Moreover, recall that $(\CP^3, \FSm)$ is Einstein, has constant holomorphic sectional curvature $4$, and the identity component of its isometry group is \[\set{ \CP^3 \to \CP^3: \hpi(x) \mapsto \hpi(U x) \mid U \in \SU(4) }.\]

Nearly Kähler $\CP^3$ can be described as the twistor space over $\sphere{4} \cong \quaternions P^1$, where $\quaternions$ denotes the quaternions. 
We denote the twistor fibration with $\ttau$. In \Cite{liefsoens2024}, a more concrete description is given of nearly Kähler $\CP^3$, and of $\CP^3$ with any homogeneous metric. 
There, the seven-sphere is embedded in $\quaternions^2$, and the following distributions are considered: $V(p) = ip$, $\lift{\Dtwo}(p) = \Span\set{ j p, k p }$ and $\lift{\Dfour} = (V \oplus \lift{\Dtwo})^{\perp} \subset T\sphere{7}$. 
It is proven that $\Dtwo = \dd \hpi (\lift{\Dtwo})$ and $\Dfour = \dd \hpi (\lift{\Dfour})$ are well defined and orthogonal for the Fubini-Study metric on $\CP^3$. 
Moreover, $\Dtwo$ is integrable and its leafs are the fibers of the twistor fibration. 

An almost product structure $P$ was defined in \Cite{liefsoens2024} by
\begin{equation}
    PA = - A \qquad PX = X \qquad A \in \Dtwo, X \in \Dfour.
\end{equation}
From this, a new almost complex structure is defined as $J = P \Jo = \Jo P$. By taking the metric 
\[ g(X,Y) = \frac{3}{2} \FSm(X,Y)+ \frac{1}{2} \FSm(X,P Y), \] the space $(\CP^3, g, J)$ is a nearly Kähler manifold. 

The Levi-Civita connections $\FSnabla$ and $\nabla$ of $\FSm$ and $g$ are linked as $\nabla_X Y = \FSnabla_X Y + D(X,Y)$, where $D$ is given by $D(X,Y) = -\frac{1}{2} \frac{J + \Jo}{2} G( P X, Y)$.
Moreover, the isometries of nearly Kähler $\CP^3$ have been characterised in \Cite{liefsoens2024} as those Kähler isometries that commute with $P$. Finally, the following expression for the Riemann curvature was given for nearly Kähler $\CP^3$:
\begin{align*}
    R(X,Y)Z 
    &= (X \wedge Y)Z
    \\
    &\quad
    + \frac{1}{2}\bigg( (X \wedge Y)Z + (\Jo X \wedge \Jo Y)Z + 2g(X,\Jo  Y) \Jo  Z \bigg) \\
    &\quad
    - \frac{1}{4}\bigg( (X \wedge Y)Z  + (J X \wedge J Y)Z + 2 g(X,JY)JZ \bigg) \\
    &\quad
    - \frac{1}{2}\bigg( (X \wedge Y)P Z + P (X \wedge Y)Z  - 2 P( X \wedge Y)P Z \bigg)  .
\end{align*}
Here, $(X \wedge Y)Z = g(Y,Z)X-g(X,Z)Y$.

Finally, we recall some useful equations that hold for the different structures on nearly Kähler $\CP^3$. First, we have that nearly Kähler is of constant type, with the constant given by unity with the choices made in this work. We then have
\begin{subequations}
\begin{align}
    \norm{G(X,Y)}^2 &= \norm{X}^2\norm{Y}^2 - g(X,Y)^2 - g(X, JY)^2 \\ 
    \nabla G(X, Y,Z) &= J (Y\wedge Z)X  - g(JY,Z)X  \label{eq:nablaG_curv} \\
    G(X,G(Y,Z)) &= (Y\wedge Z)X  + J (Y\wedge Z)J X \label{eq:GG_curv}  \\ 
    g(G(X,Y),G(Z,W)) &= g\l( (Z\wedge W)Y, X\r) + g\l( J(Z\wedge W)JY, X\r),
 \end{align} 
 \end{subequations}
  with $G= \nabla J$.
Second, we have the covariant derivatives of the almost product structure $P$ and of the Kähler almost complex structure $\Jo$:
\begin{align}\label{eq:derivatives_P_J0}
    (\nabla P)(X,Y) &= P G\l( \frac{J + \Jo}{2} X, Y \r), &
    (\nabla \Jo)(X,Y) &= \frac{J + \Jo}{2} G\l( \frac{J-\Jo}{2} X, Y \r).
\end{align}

The main tool in this work is the use of the fundamental equations of Riemannian submanifolds, together with the second equation of \Cref{eq:derivatives_P_J0}. In all examples thus far encountered within the nearly Kähler $\CP^3$, if the data satisfies the fundamental equations together with this additional requirement, existence and uniqueness can be proven. However, a general existence and uniqueness result has not been proven, and so these are ad hoc arguments. Given the experience that all submanifolds of the nearly Kähler $\CP^3$ seem to satisfy such a statement, we make the following conjecture. 
\begin{conjecture}
	\underline{Existence.} Let $(M,g)$ be an $n$-dimensional simply connected Riemannian manifold $(n<6)$ and let $\mathcal{E}$ be a Riemannian vector bundle of rank $p<6$ over $M$ with metric compatible connection $\nabla^\mathcal{E}$ and curvature tensor $R^\mathcal{E}$. Suppose $h^\mathcal{E}$ is a smooth and symmetric map from $TM\times TM$ to $\mathcal{E}$ and define for each section $\xi$ of $\mathcal{E}$ the map $A_\xi^\mathcal{E}$ by $g(A_\xi^\mathcal{E}X,Y)=g(h^\mathcal{E}(X,Y),\xi)$. Finally, assume that $(\nabla^\mathcal{E},h^\mathcal{E},A^\mathcal{E},R^\mathcal{E})$ satisfy the Gauss, Codazzi and Ricci equations with as ambient manifold the nearly Kähler $\CP^3$; and that the equation $(\nabla \Jo)(X,Y) = \frac{J + \Jo}{2} G\l( \frac{J-\Jo}{2} X, Y \r)$ is satisfied.
	Then, there exists an isometric immersion $f:M\to (\CP^3, g, J)$ and a vector bundle isometry $\phi:\mathcal{E}\to T^\perp M$ such that $\nabla^\perp \phi = \phi \nabla^\mathcal{E}$ and $h=\phi \circ h^\mathcal{E}$.
    \textbf{}
	\\
    \underline{Uniqueness.} Let $f_1,f_2:M\to (\CP^3,g, J)$ be isometric immersions of a Riemannian manifold into the nearly Kähler $\CP^3$. Assume that there exists a vector bundle isometry $\phi:T^{\perp,f_1}M\to T^{\perp,f_2}M$ such that $\phi \circ \nabla^{\perp,f_1} = \nabla^{\perp,f_2} \circ \phi$ and $\phi\circ h^{f_1} = h^{f_2}$.
	Then there exists an isometry $\psi:\tilde{M}_c\to\tilde{M}_c$ such that $\psi \circ f = g$ and $\eval{\dd \psi}_{T{^{\perp,f}}M} = \phi$.
\end{conjecture}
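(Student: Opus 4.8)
The statement is a fundamental (Bonnet-type) theorem for submanifolds of the \emph{non-symmetric} homogeneous space nearly Kähler $\CP^3 = \Sp(2)/(\U(1)\times\Sp(1))$, so my plan is to reduce both halves to Cartan's theorem on maps into a Lie group. Write $\mathfrak{g} = \sp(2)$ with reductive splitting $\mathfrak{g} = \h \oplus \m$, where $\m \cong T_o\CP^3$ carries $J$, $\Jo$ and $P$ as $\Ad(H)$-equivariant endomorphisms, and let $\theta$ be the Maurer--Cartan form of $G = \Sp(2)$ (or the relevant quotient acting effectively). The whole scheme rests on translating ``immersion with prescribed second fundamental form and normal data'' into ``a $\mathfrak{g}$-valued $1$-form on an adapted frame bundle over $M$ satisfying the Maurer--Cartan equation.''

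For \textbf{existence}, I would first build the bundle $\mathcal{F}\to M$ of frames adapting an orthonormal frame of $TM$ and of $\mathcal{E}$ to a fixed orthonormal basis of $\m \cong T_o\CP^3$ (respecting the $P$-decomposition into the parts modelling $\Dtwo$ and $\Dfour$ and the actions of $J$ and $\Jo$). On $\mathcal{F}$ I would assemble a $\mathfrak{g}$-valued $1$-form $\omega$ from the tautological coframe of $M$ (its $\m$-component), the Levi-Civita connection of $g$ together with $\nabla^{\mathcal{E}}$, the shape operator $A^{\mathcal{E}}$ and $h^{\mathcal{E}}$ (the mixed tangent–normal components), and the intrinsic torsion $G=\nabla J$ encoded through the $\Ad$-action. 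Then I would verify the structure equation $\dd\omega + \tfrac12[\omega,\omega] = 0$ componentwise. Since $M$ is simply connected, Cartan's theorem produces a map $F:M\to G$, unique up to left translation, with $F^*\theta = \omega$; composing with $\pi:G\to G/H$ gives the immersion $f$, and reading off the components of $\omega$ recovers $g$, $h$, $\nabla^\perp$ and the bundle isometry $\phi$.

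For \textbf{uniqueness}, I would lift $f_1$ and $f_2$ to adapted-frame maps $F_1,F_2:\mathcal{F}\to G$, choosing the normal frame for $f_2$ to be the $\phi$-image of that for $f_1$. Because the two immersions share the induced metric, the second fundamental form (via $\phi$) and the normal connection (via $\phi$), and because $J$, $\Jo$ and $P$ are intrinsic to $G/H$, the pulled-back forms agree: $F_1^*\theta = F_2^*\theta$. Hence $F_1 = a\cdot F_2$ for a fixed $a\in G$, and the isometry $\psi$ of nearly Kähler $\CP^3$ induced by $a$ satisfies $\psi\circ f_2 = f_1$ with $\eval{\dd\psi}_{T^\perp M} = \phi$.

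The \textbf{hard part}---and the reason this remains a conjecture---is the integrability step in existence. Because nearly Kähler $\CP^3$ is not symmetric, its canonical connection has nonzero torsion $G=\nabla J$ and non-parallel curvature, so the $[\m,\m]$-component of the structure equation is genuinely nontrivial and the Gauss, Codazzi and Ricci equations no longer close on their own. One must show that the extra compatibility $(\nabla\Jo)(X,Y) = \tfrac{J+\Jo}{2}G(\tfrac{J-\Jo}{2}X, Y)$, together with the curvature formula of \Cref{sec:description}, exactly reconstructs the missing $\h$-valued and $\Jo$-twisting parts of $\omega$ so that $\dd\omega + \tfrac12[\omega,\omega] = 0$ holds identically. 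Equivalently, one must prove that the data on $M$ and $\mathcal{E}$ forces a consistent, closed evolution of the \emph{non-parallel} tensor $\Jo$ (and of $P$) along the pulled-back tangent-plus-normal bundle; it is precisely this reconstruction---unavailable from Gauss--Codazzi--Ricci alone---that has so far been checked only case by case and for which a uniform argument is still missing.
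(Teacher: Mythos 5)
You have not proved the statement, and neither does the paper: this is stated as a \emph{Conjecture}, and the authors say explicitly that ``a general existence and uniqueness result has not been proven,'' only ad hoc case-by-case arguments. So there is no proof in the paper to compare against, and your proposal does not close the gap either --- you set up the standard reduction (adapted frame bundle, $\mathfrak{g}$-valued $1$-form $\omega$ on it, Cartan's theorem for simply connected $M$), and then you yourself concede that the decisive step, verifying $\dd\omega + \tfrac12[\omega,\omega]=0$ from the hypotheses, ``has so far been checked only case by case.'' That step is not a technical detail; it \emph{is} the conjecture. Because $\CP^3 = \Sp(2)/(\SU(2)\times\U(1))$ is reductive but not symmetric, the ambient curvature restricted along $M$ involves the non-parallel tensors $\Jo$ and $P$, so the Gauss, Codazzi and Ricci equations do not form a closed system; one must show that adjoining the single equation $(\nabla \Jo)(X,Y) = \frac{J + \Jo}{2} G\left( \frac{J-\Jo}{2} X, Y \right)$ yields a closed, involutive system (differentiating it must produce no new independent constraints). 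Your outline names this obstruction accurately but supplies no argument for it, so the ``proof'' is a restatement of the problem in moving-frames language.

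There is also an unaddressed gap in your uniqueness argument. The hypotheses only require $\phi$ to intertwine the metric, the second fundamental forms and the normal connections; they say nothing about how $\phi$ interacts with $J$, $\Jo$ or $P$. Since the isotropy group $\SU(2)\times\U(1)$ is far from acting transitively on orthonormal frames of $\m$, two immersions with identical $(g, h, \nabla^\perp)$-data can a priori sit differently relative to the ambient structures --- concretely, the functions $b_i, c_i$ of \Cref{eq:J0def_general}, which record the position of $TM$ relative to $\Jo$, are not part of the given data. Your claim that ``the pulled-back forms agree: $F_1^*\theta = F_2^*\theta$'' therefore does not follow; one would first have to prove that $(g, h, \nabla^\perp)$ determines these relative-position functions up to the isotropy action, and that is again essentially the open content of the conjecture (indeed, \Cref{thm:frame_TR_in_NKCP3} shows these functions carry genuine extra information, distinguishing Types 0, 1 and 2). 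In short: the framework you chose is the natural one, and a correct identification of where the difficulty sits, but both halves of the proposal terminate exactly at the point where a proof would have to begin.
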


%%%%%%%%%%%%%%%%%
\subsection{Nearly Kähler \texorpdfstring{$\CP^3$}{ℂP³} as a homogeneous space}\label{sec:homogeneous_description}
%%%%%%%%%%%%%%%%%

The nearly Kähler $\CP^3$ can be written as a Riemannian homogeneous space $\CP^3 = \frac{\Sp(2)}{\SU(2) \times \U(1)}$, where we see $\Sp(2)$ as $\U(4) \cap \Sp(4, \complexs)$. The metric on $\sp(2)$ is (minus) the Killing form: $g(x,y) = \frac{1}{4} \tr(x^\dagger y)$, where $x^\dagger = \overline{x}^T$ denotes the conjugate transpose of $x$. Take $\set{h_0, h_1, h_2, h_3}$ as generators of the isotropy $\u(1) \oplus \su(2)$, with 
\begin{align*}
    & & h_0 &= \sqrt{2} i \diag(1, 0, -1, 0),
                                % \mqty( 1 & 0 & 0 & 0 \\
                                % 0 & 0 & 0 & 0 \\
                                % 0 & 0 & -1 & 0 \\
                                % 0 & 0 & 0 & 0 ), 
     & & 
    \\ h_1 &= \sqrt{2} i \mqty( 0 & 0 & 0 & 0 \\
                                0 & 1 & 0 & 0 \\
                                0 & 0 & 0 & 0 \\
                                0 & 0 & 0 & -1 ), 
    & h_2 &= -\sqrt{2} i \mqty( 0 & 0 & 0 & 0 \\
                                0 & 0 & 0 & 1 \\
                                0 & 0 & 0 & 0 \\
                                0 & 1 & 0 & 0 ), 
    & h_3 &= \sqrt{2} \mqty( 0 & 0 & 0 & 0 \\
                                0 & 0 & 0 & 1 \\
                                0 & 0 & 0 & 0 \\
                                0 & -1 & 0 & 0 ), 
\end{align*}
and let $\mathfrak{m}$ be spanned by $\set{m_1,\hdots, m_6}$, given by 
\begin{align*}
    m_1 &= \sqrt{2} \mqty( 0 & 0 & 1 & 0 \\
                                0 & 0 & 0 & 0 \\
                                -1 & 0 & 0 & 0 \\
                                0 & 0 & 0 & 0 ), 
    & m_2 &= -\sqrt{2} i \mqty( 0 & 0 & 1 & 0 \\
                                0 & 0 & 0 & 0 \\
                                1 & 0 & 0 & 0 \\
                                0 & 0 & 0 & 0), 
    & m_3 &=  \mqty( 0 & 0 & 0 & 1 \\
                                0 & 0 & 1 & 0 \\
                                0 & -1 & 0 & 0 \\
                                -1 & 0 & 0 & 0 ), 
    \\
    m_4 &= \mqty( 0 & 1 & 0 & 0 \\
                                -1 & 0 & 0 & 0 \\
                                0 & 0 & 0 & 1 \\
                                0 & 0 & -1 & 0 ), 
    & m_5 &= i \mqty( 0 & 0 & 0 & 1 \\
                                0 & 0 & 1 & 0 \\
                                0 & 1 & 0 & 0 \\
                                1 & 0 & 0 & 0 ), 
    & m_6 &= i \mqty( 0 & 1 & 0 & 0 \\
                                1 & 0 & 0 & 0 \\
                                0 & 0 & 0 & -1 \\
                                0 & 0 & -1 & 0). 
\end{align*}
This gives a reductive decomposition $\sp(2) = (\u(1)\oplus\su(2)) \oplus \mathfrak{m}$. With these choices, the nearly Kähler almost complex structure $J$ acts as 
\begin{align*}
    J m_1 &= m_2, &
    J m_2 &= -m_1, &
    J m_3 &= -m_5, &
    J m_4 &= -m_6, &
    J m_5 &= m_3, &
    J m_6 &= m_4,
\end{align*}
while the almost product structure $P$ acts as 
\begin{align*}
    P m_i &= -m_i, &
    P m_j &= m_j, & i \in \set{1,2}, j \in \set{3,4,5,6}.
\end{align*}

For the purposes of this paper, this is all we need of this description of the nearly Kähler $\CP^3$.

%%%%%%%%%%%%%%%%%
\section{A frame for totally real surfaces}\label{sec:frame}
%%%%%%%%%%%%%%%%%

Let $M$ be a totally real surface of the nearly Kähler $\CP^3$. Suppose $U, V$ form an orthonormal frame on $M$. Since $M$ is totally real, $g(U, JV) = 0$. We define the following frame for $T \CP^3$ on some open of $M$:
\begin{align} \label{eq:frame_TR_surface}
    e_1 &= U, &
    e_2 &= V, &
    e_3 &= JU, &
    e_4 &= JV,  &
    e_5 &= G(U,V), &
    e_6 &= JG(U,V) .
\end{align}
 From \Cref{eq:GG_curv} and up to symmetry $G(X,Y) + G(Y,X) = 0$, we then find $G(e_1, e_3) = G(e_2, e_4) = G(e_5, e_6) = 0$ and 
\begin{align*}
    G(e_2, e_5) &= - G(e_4, e_6) = e_1 &
    G(e_1, e_5) &= - G(e_3, e_6) = - e_2 &
    G(e_2, e_6) &= G(e_4, e_5) = -e_3 \\ 
    G(e_1, e_6) &= G(e_3, e_5) = e_4 &
    G(e_1, e_2) &= - G(e_3, e_4) = e_5  &
    G(e_1, e_4) &= - G(e_2, e_3) = -e_6
\end{align*}

Given the frame of \Cref{eq:frame_TR_surface}, we can introduce functions $b_i, c_i$ to express the almost complex structure $\Jo$ in this frame as $\Jo U = + b_1 V + b_2 JU + b_3 JV + b_4 G(U,V) +b_5 JG(U,V)$ and $\Jo V = - b_1 U + b_3 JU + c_3 JV + c_4 G(U,V) + c_5 JG(U,V)$.

Since $P=-J\Jo$ restricted to the tangent space turns out to be a symmetric operator, we can diagonalise by performing a rotation in $U,V$, such that $b_3 = 0$. We obtain
\begin{equation}\label{eq:J0def_general}
    \begin{cases}
    \Jo U &= b_1 V + b_2 JU + b_4 G(U,V) +b_5 JG(U,V), \\
    \Jo V &= - b_1 U + c_3 JV + c_4 G(U,V) + c_5 JG(U,V).
    \end{cases}
\end{equation}
Note that then $\FSm(U,V) = - \frac{1}{4} g( U, P V) = - \frac{1}{4} g( J U, \Jo V) = 0$. 

The above frame is not unique. We are still allowed to change the signs of $U$ and/or $V$; and to interchange $U$ and $V$. Doing so, if necessary by restricting to an open dense subset, we may assume that $b_1 \ge 0$. For the same reason, we may also assume that either $b_4^2+b_5^2 \ne 0$ or $b_4=b_5=c_4=c_5=0$. 

We introduce the local functions $d_1, d_2$ such that the intrinsic connection $\nabla^M$ takes the following form:
\begin{align*}
    \nabla^M_U U &= d_1 V, &
    \nabla^M_U V &= -d_1 U, &
    \nabla^M_V U &= -d_2 V, &
    \nabla^M_V V &= d_2 U.
\end{align*}
Let $h$ denote the second fundamental form. From the Gauss formula, we find
\begin{align*}
    \nabla_U e_1 &= d_1 V + h(U,U), & \nabla_V e_1 &= -d_2 V + h(U,V), &
    \nabla_U e_2 &= -d_1 U + h(U,V), \\ \nabla_V e_2 &= d_2 U + h(V,V), &
    \nabla_U e_3 &= J \nabla_U e_1, & \nabla_V e_3 &= G(V,U) + J \nabla_V e_1, \\
    \nabla_U e_4 &= G(U,V) + J \nabla_U e_2, & \nabla_V e_4 &= d_2 JU + Jh(V,V),
\end{align*}
and from \Cref{eq:nablaG_curv}, we have 
\begin{align*}
    \nabla_U e_5 &= - JV + G( \nabla_U e_1,V) + G( U,\nabla_U e_2), & \nabla_V e_5 &= JU + G( \nabla_V e_1,V) + G( U,\nabla_V e_2), \\
    \nabla_U e_6 &= G(U, e_5) + J \nabla_U e_5,  & \nabla_V e_6 &= G(V, e_5) + J \nabla_V e_5.
\end{align*}

Given the frame of \Cref{eq:frame_TR_surface}, we can easily show the following fact, which is similar to the situation for Lagrangian submanifolds.
\begin{lemma}
    Let $h$ denote the second fundamental form of a totally real surface in a nearly Kähler manifold $(M, g, J)$. Then, $(X,Y,Z) \mapsto g(h(X,Z), JY )$ is totally symmetric for all $X,Y,Z$ tangent to $M$.
\end{lemma}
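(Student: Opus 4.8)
The claim is that the trilinear form $(X,Y,Z)\mapsto g(h(X,Z),JY)$ is totally symmetric. Symmetry in the pair $(X,Z)$ is immediate because the second fundamental form $h$ is itself symmetric, so $g(h(X,Z),JY)=g(h(Z,X),JY)$. Thus the entire content of the lemma is the symmetry under exchanging one of the ``input'' slots $X$ or $Z$ with the ``$JY$'' slot, i.e. the identity $g(h(X,Z),JY)=g(h(X,Y),JZ)$. The plan is to reduce the whole statement to this single interchange and then prove it directly.

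The strategy I would follow is to express $g(h(X,Z),JY)$ using the Gauss formula together with the compatibility of $J$ with the metric. Since $J$ is metric-compatible (orthogonal with respect to $g$), we have $g(h(X,Z),JY)=-g(Jh(X,Z),Y)$, and the point is to relate $Jh(X,Z)$ to a covariant derivative. Writing $\nabla_X(JZ)=(\nabla_X J)Z+J\nabla_X Z=G(X,Z)+J\nabla_X Z$, and decomposing $\nabla_X Z$ into its tangential and normal (i.e.\ $h(X,Z)$) parts via Gauss, I would take the $g$-inner product of $\nabla_X(JZ)$ with the tangent vector $Y$. Because $M$ is totally real, $JZ$ and $JY$ are normal to $M$, so $g(JZ,Y)=0$ identically along $M$; differentiating this relation in the direction $X$ gives $g(\nabla_X(JZ),Y)+g(JZ,\nabla_X Y)=0$. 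The second term is $g(JZ,h(X,Y))$ up to the tangential part of $\nabla_X Y$, which is killed because $JZ$ is normal. Substituting $\nabla_X(JZ)=G(X,Z)+J\nabla_X Z$ and using that $J h(X,Z)$ is the normal contribution paired against the tangent vector $Y$, this collapses to $g(J h(X,Z),Y)+g(G(X,Z),Y)+g(JZ,h(X,Y))=0$.

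The key structural input is then the behaviour of the $G=\nabla J$ term: $g(G(X,Z),Y)$ is totally skew-symmetric in its three arguments because $G$ is skew-symmetric ($G(X,Z)=-G(Z,X)$) and, by the nearly Kähler condition, $g(G(X,Z),Y)$ changes sign under any transposition. From the displayed relation I would read off $g(Jh(X,Z),Y)=-g(G(X,Z),Y)-g(h(X,Y),JZ)$, and rewriting the left side as $-g(h(X,Z),JY)$ yields $g(h(X,Z),JY)-g(h(X,Y),JZ)=-g(G(X,Z),Y)$. Swapping the roles of $Y$ and $Z$ and adding, the skew-symmetry of $g(G(X,Z),Y)$ in $Y,Z$ forces the $G$-terms to cancel, giving exactly $g(h(X,Z),JY)=g(h(X,Y),JZ)$, which is the interchange symmetry we needed. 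Together with the manifest $X\leftrightarrow Z$ symmetry from symmetry of $h$, this establishes total symmetry.

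The main obstacle is bookkeeping the normal-versus-tangential decomposition correctly and verifying that the $G$-term really is antisymmetric in the relevant slots in this nearly Kähler setting; the defining skew-symmetry of $\nabla J$ (that $g((\nabla_X J)Y,Z)$ is alternating) is precisely what makes the unwanted curvature-type terms drop out, so I would state that property explicitly and check the sign conventions against the frame relations for $G$ recorded just before the lemma. No coordinate computation in the explicit frame \eqref{eq:frame_TR_surface} is actually required; the argument is intrinsic and uses only metric-compatibility of $J$, the Gauss formula, the totally real condition, and the alternating property of $\nabla J$.
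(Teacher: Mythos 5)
Your computation is correct (up to an inconsequential sign) up to the displayed relation: differentiating $g(JZ,Y)=0$ in the direction $X$ and using the Gauss formula, metric compatibility of $J$, and the totally real condition gives
\begin{equation*}
g(h(X,Z),JY)-g(h(X,Y),JZ)=g(G(X,Z),Y),
\end{equation*}
which is precisely the identity the paper derives (the paper differentiates $g(X,JY)=0$ in the direction $Z$ and targets the $X\leftrightarrow Y$ interchange instead; the two reductions are equivalent). The gap is in your last step. Both sides of this identity are antisymmetric under the swap $Y\leftrightarrow Z$: the left side manifestly, and the right side because $g(G(X,Y),Z)=-g(G(X,Z),Y)$. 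So when you ``swap the roles of $Y$ and $Z$ and add'', the $G$-terms cancel, but so do the $h$-terms, and you obtain $0=0$. Nothing is proved by that step.

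What is missing is an argument that $g(G(X,Z),Y)=0$ whenever all three vectors are tangent to $M$. This is exactly the fact the paper invokes ($G$ applied to two tangent vectors is normal to $M$): for a surface it is immediate, because $g(G(\cdot,\cdot),\cdot)$ is totally skew by the nearly Kähler condition---a property you state explicitly but never actually use---and a totally skew trilinear form on a two-dimensional tangent space vanishes identically. Alternatively, you can stay purely algebraic: sum the displayed identity over the three cyclic permutations of $(X,Y,Z)$. The left-hand side telescopes to zero because $g(h(X,Z),JY)$ is symmetric in $X,Z$, while by total skewness the right-hand side equals $3\,g(G(X,Z),Y)$; hence $g(G(X,Z),Y)=0$, and the identity then yields the desired interchange symmetry (this variant even works for totally real submanifolds of any dimension, not just surfaces). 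Either one-line repair completes your proof; the swap-and-add step as written does not.
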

\begin{proof}
    We only have to prove that $g(h(X,Z), JY ) = g( h(Y,Z), J X)$ for all $X,Y,Z$ tangent to $M$. For this, we start from the fact that $g(X, JY) = 0$, since $M$ is totally real. Differentiating this equation with respect to $Z$ and using metric compatibility, we obtain $g(h(X,Z), JY ) + g( X, G(Z, Y) ) + g( X,J  \nabla_Z Y) = 0$. Since $G$ applied to two tangent vectors is normal to $M$, and by using metric compatibility of $J$, we find $g(h(X,Z), JY ) = g( h(Y,Z), J X)$. 
\end{proof}

From the above result, we can introduce functions $k_i, m_i, k, m$ ($i=1,2,3,4$) such that $h(U,U) = k_1 e_3 + k_2 e_4 + k_3 e_5 + k_4 e_6$, $h(V,V) = m_1 e_3 + m_2 e_4 + m_3 e_5 + m_4 e_6$
and $h(U,V) = k_2 e_3 + m_1 e_4 + k e_5 + m e_6$.

We now use the fact that $\Jo$ is a metric-compatible almost complex structure, in order to divide the totally real surfaces into three different types. 

\begin{proposition}\label{thm:frame_TR_in_NKCP3}
Let $M$ be a totally real surface in the nearly Kähler $\CP^3$ and consider the functions of \Cref{eq:J0def_general}, i.e. $b_1, b_2, b_4, b_5, c_3, c_4$ and $c_5$. By restricting to an open dense subset, there exists a frame such that either
\begin{enumerate}
  \item[0)]  $M$ is of Type 0, i.e. there are local functions $\lambda$, $\alpha$ and $\theta$ such that
   \begin{align*}
        b_1 &= \tfrac{\lambda}{1+\lambda^2}(1-\sin \alpha), & b_2 &=\tfrac{\lambda^2 +\sin \alpha}{1+\lambda^2}, & b_4 &=\tfrac{\cos \alpha \sin \theta}{\sqrt{1+\lambda^2}},  & b_5&=\tfrac{\cos \alpha \cos \theta}{\sqrt{1+\lambda^2}}, \\
        c_3&= \tfrac{1+\lambda^2 \sin \alpha}{1+\lambda^2},
        & c_4&=\tfrac{\lambda \cos \alpha \cos \theta}{\sqrt{1+\lambda^2}}, & c_5&=-\tfrac{\lambda \cos \alpha \sin \theta}{\sqrt{1+\lambda^2}}. & &
  \end{align*}

    Moreover, at any point, $\alpha$ can be chosen to lie in $]-\tfrac \pi{2},\tfrac{\pi}{2}[$.
    \item[1)] $M$ is of Type 1, i.e. we have that
    \begin{equation*}
    b_1=b_2=c_3=\tfrac 12 \qquad
    b_4=-c_5=\tfrac 1{\sqrt{2}} \qquad
    b_5=c_4=0.
    \end{equation*}
    \item[2)] $M$ is of Type 2, i.e. there exists a local function $\alpha$ such that 
    \begin{equation*}
        b_1 =\cos \alpha, \qquad
        b_2 =\sin \alpha, \qquad
        c_3= -\sin \alpha, \qquad
        b_4 =b_5=c_4=c_5=0.
    \end{equation*}
    \end{enumerate}
    Moreover, there is no overlap between the three types, and $\FSm(U,V) = 0$ in any case.
\end{proposition}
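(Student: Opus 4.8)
The plan is to read off the constraints imposed by the single fact that $\Jo$ is an $\FSm$-compatible almost complex structure, translate them into polynomial relations among the seven functions of \Cref{eq:J0def_general}, and then solve the resulting algebraic system case by case. Two structural observations make this tractable. First, from $J = P\Jo = \Jo P$ one has $P\Jo = \Jo P$, hence $J\Jo = \Jo^2 P = \Jo J$, so $\Jo$ commutes with $J$. Second, and crucially, $\Jo$ is not only $\FSm$-orthogonal but also $g$-orthogonal: using $g(X,Y) = \tfrac32\FSm(X,Y) + \tfrac12\FSm(X,PY)$ together with $\FSm(\Jo X,\Jo Y) = \FSm(X,Y)$ and $P\Jo = \Jo P$ gives $g(\Jo X,\Jo Y) = g(X,Y)$. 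In the $g$-orthonormal frame of \Cref{eq:frame_TR_surface} this means the matrix of $\Jo$ is skew-symmetric, while $J$ acts by $Je_1 = e_3$, $Je_3 = -e_1$, $Je_5 = e_6$, $Je_6 = -e_5$, etc.

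Next I would exploit these to reduce the number of unknowns. Given $\Jo e_1$ and $\Jo e_2$ (the seven functions, with $b_3 = 0$), commutation $\Jo J = J\Jo$ fixes $\Jo e_3 = J\Jo e_1$ and $\Jo e_4 = J\Jo e_2$, and skew-symmetry then determines every remaining entry except one, namely $q := g(\Jo e_5, e_6)$. Imposing $\Jo^2 = -\identity$ (equivalently, orthonormality of the columns) produces the full system: the normalisations $\norm{\Jo e_1}^2 = \norm{\Jo e_2}^2 = \norm{\Jo e_5}^2 = 1$, the orthogonality $b_4 c_4 + b_5 c_5 = 0$, and cross relations such as
\[
b_1(b_2+c_3)+b_4c_5-b_5c_4=0,\qquad b_1c_4=b_5(b_2+q),\qquad b_1c_5=-b_4(b_2+q),
\]
together with the two analogues coming from $\Jo^2 e_2 = -e_2$. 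A convenient bookkeeping identity is $\operatorname{tr} P = 2$, which in this frame reads $b_2 + c_3 + q = 1$ (the diagonal entries of $P = -J\Jo$).

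I would then solve along the standing dichotomy. If $b_4 = b_5 = c_4 = c_5 = 0$, the system collapses to $b_1^2 + b_2^2 = b_1^2 + c_3^2 = 1$, $q^2 = 1$ and $b_1(b_2+c_3) = 0$; the branch $c_3 = -b_2$ yields Type 2 after setting $b_1 = \cos\alpha$, $b_2 = \sin\alpha$, and the residual branch $b_1 = 0$, $b_2 = c_3$ must be handled separately (it corresponds to $TM$ sitting inside $\Dfour$ and is disposed of using the structure of $G$). If instead $b_4^2 + b_5^2 \neq 0$, the cross relations force $(c_4,c_5)$ to be proportional to $(b_5,-b_4)$; introducing the angle $\theta$ with $(b_4,b_5) \propto (\sin\theta,\cos\theta)$, the proportionality factor $\lambda$, and the parameter $\alpha$ via $q = -\sin\alpha$, the norm and cross relations can be unwound to give exactly the Type 0 formulas. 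The exceptional sub-case where the two principal values of $P|_{TM}$ coincide (so $b_2 = c_3$ and the frame diagonalising $P|_{TM}$ is no longer unique) is where the rigid Type 1 appears after the leftover rotational freedom is spent.

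The step I expect to be the main obstacle is precisely this branch bookkeeping: verifying that the derived relations are mutually consistent, and that the residual frame freedom — the sign changes of $U,V$ and the swap $U\leftrightarrow V$, supplemented in the degenerate equal-eigenvalue situation by the leftover rotation — normalises every solution to exactly one of the three listed forms, with the stated range $\alpha \in \,]\!-\tfrac\pi2,\tfrac\pi2[$ in Type 0, and that the three descriptions genuinely do not overlap. By contrast, the final assertion $\FSm(U,V) = 0$ is immediate and type-independent, since $\FSm(U,V) = -\tfrac14 g(JU,\Jo V)$ and $\Jo e_2$ has no $e_3 = JU$ component.
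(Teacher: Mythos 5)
Your algebraic framework is sound, and in places it is cleaner bookkeeping than the paper's own computation: the $g$-orthogonality of $\Jo$, the commutation $J\Jo=\Jo J$, the auxiliary entry $q=g(\Jo e_5,e_6)$, and the trace identity $b_2+c_3+q=1$ are all correct, and the generic branch does unwind to the Type 0 formulas exactly as you say (Type 0 has $b_2+c_3=1+\sin\alpha$, matching your normalisation $q=-\sin\alpha$). Your trace identity even disposes of the sub-branch $b_1=0$, $b_2=c_3=-1$ for free, since it would force $q=3$ while $\Jo^2e_5=-e_5$ requires $q^2=1$; the paper instead handles that case together with its twin.

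The genuine gap is the surviving sub-branch $b_1=0$, $b_2=c_3=+1$ (so $q=-1$): here $\Jo=J$ on $TM$, equivalently $TM\subset\Dfour$. You assert this is \enquote{disposed of using the structure of $G$}, but no pointwise algebraic argument can dispose of it, because the configuration is realised by honest totally real planes at every point of $\CP^3$: in the homogeneous description, $\Span\set{m_3,m_4}$ lies in $\Dfour$ and is totally real (since $Jm_3=-m_5\perp m_4$), and on it $\Jo=JP=J$; consequently every identity valid on $\CP^3$ --- including the whole multiplication table of $G$ --- is consistent with this branch. What kills it is a differential argument using that $M$ is a surface, not just a field of tangent planes. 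Since $[U,V]=\nabla_UV-\nabla_VU$ is tangent and $\Jo=J$ on $TM$, one has
\begin{equation*}
    (\nabla_U\Jo)V-(\nabla_V\Jo)U=(\nabla_UJ)V-(\nabla_VJ)U=2G(U,V)\neq 0,
\end{equation*}
whereas the identity $(\nabla\Jo)(X,Y)=\tfrac{J+\Jo}{2}G\l(\tfrac{J-\Jo}{2}X,Y\r)$ of \Cref{eq:derivatives_P_J0} forces the left-hand side to vanish, because $\tfrac{J-\Jo}{2}$ annihilates tangent vectors in this branch. This contradiction is precisely how the paper excludes the case; without this use of \Cref{eq:derivatives_P_J0} together with the symmetry of the second fundamental form, your case analysis leaves open a spurious fourth type.
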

\begin{proof} 
We first assume that $b_4^2+b_5^2 \ne 0$. 
It first follows from $g(\Jo U,Jo V)=g(U,V)=0$ that $b_4 c_4 +b_5 c_5=0$. Hence there exists 
a local function $\lambda$ such that $c_4 =\lambda b_5$ and $c_5 =-\lambda b_4$.

Next we use the fact that $\Jo^2 U=-U$ and $\Jo^2 V=-V$ are orthogonal to $G(U,V)$ and $JG(U,V)$.
This gives us the following system of equations.
\begin{align}&b_1^2+b_2-b_2 c_3-b_1 \lambda=0\label{eqGuv}\\
&-b_1 +b_1^2 \lambda +c_3 \lambda -b_2 c_3 \lambda =0.\label{eqjGuv} 
\end{align}
Combining these equations it immediately follows that $b_1(1-\lambda^2)=- (b_2-c_3) \lambda$.

Now we consider several subcases. 
\begin{enumerate}
\item Assume that $\lambda(1-\lambda^2) \ne 0$. It then follows that $b_1=\tfrac{(b_2-c_3)\lambda}{\lambda^2-1}$.
Substituting this expression into \Cref{eqGuv}, it follows that
either $c_3= 1-\lambda^2+b_2 \lambda^2$ or $c_3 =\tfrac{b_2}{\lambda^2}$. We consider these seperately.
\begin{enumerate}
    \item  $c_3= 1-\lambda^2+b_2 \lambda^2$. Given that 
    $\Jo^2 U=-U$, it follows that $b_2^2+b_4^2+b_5^2=1-(1-b_2)^2 \lambda^2$, so that we can write $b_4 = \tfrac{\mu \sin \theta}{\sqrt{1+\lambda^2}}$ and $b_5 = \tfrac{\mu \cos \theta}{\sqrt{1+\lambda^2}}$. Using this, the previous equation becomes $b_2^2 (1+\lambda^2)-2 b_2 \lambda^2 +\tfrac{(\lambda^4-1)+\mu^2}{\lambda^2+1}=0$.
    Computing its discriminant (seen as a polynomial in $b_2$), we see that we can write $\mu= \cos \alpha$, with $\alpha$ (at any point) taking values in $]-\tfrac \pi 2,\tfrac \pi 2[$ and solving for $b_2$ we find that $b_2= \tfrac{\lambda^2+\sin \alpha}{1+\lambda^2}$. Using now the previous expressions for $b_1,c_3,b_4,b_5,c_4,c_5$ we see that we obtain a totally real surface of Type 0.
    \item  $c_3 =\tfrac{b_2}{\lambda^2}$. We proceed in the same way as in the previous case. Given that 
    $\Jo^2 U=-U$, it follows that $b_2(b_2-(b_4^2+b_5^2)\lambda^2)^2 =0$ and $b_2(b_2+(b_4^2+b_5^2)(1+\lambda^2) =\lambda^2$.
    From this, it follows that $b_2=(b_4^2+b_5^2) \lambda^2$ and $(b_4^2+b_5^2)= (1+\lambda^2)$. So we can write  $b_4 = \tfrac{\sin \theta}{\sqrt{1+\lambda^2}}$ and $b_5 = \tfrac{\cos \theta}{\sqrt{1+\lambda^2}}$.
    Using now the previous expressions for $b_1,c_3,b_4,b_5,c_4,c_5$ we see that we obtain a totally real surface of Type 0 with $\alpha = 0$. 
\end{enumerate}
\item $\lambda^2=1$. In this case, if necessary by interchanging $U$ and $V$ we may assume that $\lambda=1$. From the previous equation, we see that $c_3 = b_2$. Note that as in this case $J\Jo$ restricted to the tangent space is a multiple of the identity, the frame along the totally real surface still admits a rotational freedom. Therefore by applying a rotation we may assume that $b_5=c_4=0$. The equation \eqref{eqGuv} reduces to
$b_1^2-b_1=b_2^2-b_2$,
which implies that either $b_2=b_1$ or $b_1=1-b_2$.
\begin{enumerate}
    \item $b_1=1-b_2$. The fact that $\Jo$ preserves the length of the vector field $U$ now implies that 
    $(1-b_2)^2+b_2^2 +b_4^2=1$,
which reduces to 
$2b_2^2 -2 b_2 +b_4^2=0$.
By looking at the discriminant of this equation we see that we can write $b_4 = \tfrac{1}{\sqrt{2}}\cos \alpha$ and $b_2 =\tfrac{1+\sin \alpha}{2}$. 
Hence we deduce that $M$ is a totally real surface of Type 0 with $\lambda=1$ and $\theta = \tfrac{\pi}{2}$.
\item $b_1=b_2$. Here we use both the fact that $\Jo U$ is a vector with length $1$ as well as the fact that $\Jo^2 U=-U$. This gives us the following system of equations:
\begin{align*}
    2b_1^2+b_4^2 &=1, &
    b_1(b_1-b_4^2) &=0, &
    2 b_1(b_1+b_4^2) &=1.
\end{align*}
Solving this system of equations we get that $b_2 =b_4^2=\tfrac 12$. If necessary by changing the sigh of both $U$ and $V$ we may then assume that $b_4 = \tfrac{1}{\sqrt{2}}$. We conclude now that $M$ is  a totally real surface of Type 1. 
\end{enumerate}
\item $\lambda=0$. This implies that $b_1=0$.  Using again the fact that $\Jo$ preserves the lengths of vectors and that $\Jo^2 U=-U$, we get the following equations:
\begin{align*}
    c_3^2&=1 &
    b_2^2+b_4^2+b_5^2&=1 &
    b_2(1-c_3)&=0 &
    b_2^2+(b_4^2+b_5^2)c_3&=1.
\end{align*}
  In order for this system to have solutions we must have that $c_3=1$ and there has to exist angles $\alpha$ and $\theta$ such that $b_2= \sin \alpha$, $b_4=\cos \alpha \sin \theta$ and $b_5= \cos \alpha \cos \theta$ which corresponds to a surface of Type $0$ with $\lambda=0$.   
\end{enumerate}
Finally, we consider the case that $b_4=b_5=c_4=c_5=0$. Using again the fact that $\Jo$ preserves the lengths of vectors and that $\Jo^2 U=-U$, we get the following equations:
\begin{align*}
    b_1^2+b_2^2 &=1 &
    b_1^2+c_3^2 &=1 &
    b_1(b_2+c_3) &=0.
\end{align*}
So we see that either $b_1=0$ and ,$b_2=c_3=1$ or $b_2=c_3=-1$ $b_2=-c_3= \sin \alpha$ and $b_1=\cos \alpha$.  The last possibility gives a totally real surface of Type 2. The first two cases can be excluded by the following argument. Note that in those cases $\Jo=\epsilon J$ on the tangent space. 
As the second fundamental form is symmetric, it therefore also follows that
$\Jo(\nabla_U V-\nabla_V U) =\epsilon J(\nabla_U V-\nabla_V U))$.
Therefore, we have
\begin{equation*}
    (\nabla_U \Jo) V- (\nabla_V \Jo) U 
    =\epsilon ( (\nabla_U J) V- (\nabla_V J) U) 
    =\epsilon (G(U,V)-G(V,U)) 
    =2 \epsilon G(U,V),
\end{equation*}
from which a contradiction follows immediately. 

We already remarked that $\FSm(U,V) = 0$, from the fact that we diagonalised $K$ on $M$. 
\end{proof}
\begin{remark}
    Note that in the previous theorem, totally real surfaces of Type 2 could be considered as a special case of totally real surfaces of Type 0 (with $\alpha = -\pi/2$). However, the surfaces of Type 2 most often lead to special cases in the fundamental equations, making it computationally beneficial to consider it a separate case. Geometrically, Type 2 surfaces are expected to be special cases because of the following. For every totally real surface $M$, the subspace of $T\CP^3$ containing $TM$ and invariant under $J$ and $\Jo$ is either four- or six-dimensional. The surfaces of Type 2 are exactly those for which the minimum of four is reached and $\Jo TM \subset TM \oplus J TM$. 
    
    Additionally, it is probable that there exist surfaces of Type 1, as the system is not overdetermined. However, every natural additional condition on the surface immediately leads to an overdetermined contradictory system. 
\end{remark}

The totally real surfaces of the nearly Kähler $\CP^3$ are linked to totally real and almost complex surfaces in Kähler $\CP^3$ as follows. 
\begin{proposition}\label{thm:Kahler_TR_AC_type}
    A totally real surface of nearly Kähler $\CP^3$ is totally real for $\Jo$ if and only if it is of Type 0 with $\lambda=0$ or of Type 2 with $\cos \alpha=0$.
    Moreover, it is almost complex for Kähler $\CP^3$ if and only if it is of Type 2 with $\sin \alpha=0$.
\end{proposition}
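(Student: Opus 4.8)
The plan is to read both conditions straight off the frame expression of $\Jo$ on the tangent bundle given in \Cref{eq:J0def_general}. The surface $M$ is spanned by $e_1=U$ and $e_2=V$, while $\{e_3,e_4,e_5,e_6\}=\{JU,JV,G(U,V),JG(U,V)\}$ spans the normal bundle (each of these four is normal since $M$ is totally real for $J$ and $G$ of two tangent vectors is normal). Consequently $M$ is totally real for $\Jo$ exactly when $\Jo U$ and $\Jo V$ have vanishing tangential component, and almost complex for $\Jo$ exactly when they have vanishing normal component. Both statements thus reduce to checking which of the coefficients $b_1,b_2,b_4,b_5,c_3,c_4,c_5$ vanish, which can then be settled type by type using the explicit formulas in \Cref{thm:frame_TR_in_NKCP3}.

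For the totally real case, the only tangential contributions to $\Jo U$ and $\Jo V$ are $b_1 V$ and $-b_1 U$, respectively, so $M$ is totally real for $\Jo$ if and only if $b_1=0$. I would then solve $b_1=0$ in each type. In Type 0 one has $b_1=\tfrac{\lambda}{1+\lambda^2}(1-\sin\alpha)$; since $\alpha$ may be taken in $]-\tfrac\pi2,\tfrac\pi2[$ the factor $1-\sin\alpha$ is strictly positive, whence $b_1=0\iff\lambda=0$. In Type 1 one has $b_1=\tfrac12\neq 0$, so this case is excluded. In Type 2 one has $b_1=\cos\alpha$, so $b_1=0\iff\cos\alpha=0$. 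This gives precisely the stated dichotomy.

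For the almost complex case, $M$ is almost complex for $\Jo$ if and only if all normal components of $\Jo U$ and $\Jo V$ vanish, i.e. $b_2=b_4=b_5=0$ and $c_3=c_4=c_5=0$. Substituting the type formulas: in Type 0 the vanishing of $b_4=\tfrac{\cos\alpha\sin\theta}{\sqrt{1+\lambda^2}}$ and $b_5=\tfrac{\cos\alpha\cos\theta}{\sqrt{1+\lambda^2}}$ already forces $\cos\alpha=0$, which is impossible for $\alpha\in]-\tfrac\pi2,\tfrac\pi2[$, so Type 0 never occurs; Type 1 is excluded because $b_2=\tfrac12$; and in Type 2 one automatically has $b_4=b_5=c_4=c_5=0$ while $b_2=\sin\alpha$ and $c_3=-\sin\alpha$, so the remaining conditions collapse to $\sin\alpha=0$.

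The whole argument is a direct verification, and there is no real obstacle; the only point requiring care is to respect the admissible range $]-\tfrac\pi2,\tfrac\pi2[$ of $\alpha$ in Type 0, since it is exactly this restriction that prevents the degenerate values $\sin\alpha=\pm1$ and $\cos\alpha=0$ from appearing and allowing a Type 0 surface to masquerade as a totally real or almost complex one for $\Jo$.
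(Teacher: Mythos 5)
Your conclusions and your type-by-type computations are all correct, and your overall strategy is essentially the same as the paper's: read both conditions off the coefficients in \Cref{eq:J0def_general} and check them case by case using the explicit formulas of \Cref{thm:frame_TR_in_NKCP3}. However, there is one step in the totally real half that you assert without justification, and it is precisely the point where the presence of two metrics matters. ``Totally real for $\Jo$'' means totally real as a submanifold of \emph{Kähler} $\CP^3$, i.e. $\Jo(TM)$ must be orthogonal to $TM$ with respect to the Fubini--Study metric $\FSm$; this is the criterion the paper works with, namely $\FSm(\Jo U, V)=0$. Your criterion, ``the tangential component of $\Jo U$ and $\Jo V$ vanishes,'' refers to the decomposition along the frame $e_1,\dots,e_6$, which is orthonormal for the nearly Kähler metric $g$; so what you actually check is $g(\Jo U,V)=0$, i.e. $b_1=0$. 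Since the $g$- and $\FSm$-orthogonal complements of $TM$ do not coincide in general, these are a priori different conditions, and your reduction of the totally real property to $b_1=0$ needs an argument.

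The gap is harmless, but it must be closed. Inverting the relation $g(X,Y)=\tfrac32 \FSm(X,Y)+\tfrac12 \FSm(X,PY)$ gives $\FSm(X,Y)=\tfrac34\, g(X,Y)-\tfrac14\, g(X,PY)$; since $P$ is symmetric for $g$ and $P\Jo = J$ by definition of $J$, one finds
$\FSm(\Jo U,V)=\tfrac34\, g(\Jo U,V)-\tfrac14\, g(P\Jo U,V)=\tfrac34\, g(\Jo U,V)-\tfrac14\, g(JU,V)=\tfrac34\, b_1$,
where the last term vanishes because $M$ is totally real for the nearly Kähler structure. Hence $b_1=0$ is indeed equivalent to the correct $\FSm$-orthogonality condition, and with this one-line identity your argument becomes complete and agrees with the paper's (which verifies $\FSm(\Jo U,V)\propto\lambda(\sin\alpha-1)$ in Type 0, etc.). Note that no such issue arises in your almost complex half: the condition $\Jo(TM)\subseteq TM$ is metric-independent, so ``vanishing normal component'' is unambiguous there, and your treatment of that part is if anything slightly cleaner than the paper's, which rescales to $\FSm$-unit vectors $U_0,V_0$ and imposes $\Jo U_0=\pm V_0$.
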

\begin{proof}
    A nearly Kähler totally real surface is totally real for $\Jo$ if and only if $\FSm(\Jo U, V)=0$. We see immediately that Type 1 is then impossible. For Type 2, we see immediately that this condition is equivalent to $\cos \alpha$ vanishing. If the surface is of Type 0, we have that $\FSm(\Jo U,V)$ is proportional to $\lambda (\sin\alpha-1)$. Since $\alpha = \pm \pi/2$ is excluded from Type 0, we have to conclude that $\lambda = 0$ is the only option.

    If the surface is almost complex for Kähler $\CP^3$, then, and only then, $\Jo U_0 = \pm V_0$. Here, $U_0$ and $V_0$ are $U$ and $V$ rescaled to be of $\FSm$-unit norm, which are also orthogonal to each other. For Types 0 and 1, it is immediate that this condition is never satisfied. For Type 2, we find that $\sin \alpha$ has to vanish.
\end{proof}

% \begin{proposition}
%     If $M$ is a totally real surface of nearly Kähler $\CP^3$ that is immersed in a Lagrangian submanifold with $\theta^{\mathcal{L}}=0$, then one of the following must hold.
%     \begin{itemize}
%         \item $M$ is of Type 0 with $\lambda = 0$ and $\cos \theta = 0$.
%         \item $M$ is of Type 2 with $\cos\alpha = 0$.
%     \end{itemize}
% \end{proposition}
% \begin{proof}
    
% \end{proof}
% \todo[inline]{the Type 2 are (potentially) exactly the Codazzi like examples}
% \todo[inline]{the Type 0 (potentially) includes minimal examples }

%%%%%%%%%%%%%%%%%
\section{Intermezzo: Legendre curves in spheres}\label{sec:legendre_curves}
%%%%%%%%%%%%%%%%%
In this section, we recall the notion of a Legendre curve (or $C$-totally real) in $\sphere{3} \subset \complexs^2$ as to introduce numerous examples of totally real surfaces later. Moreover, we also extend the notion of a Legendre curve to $\sphere{7} \subset \quaternions^2$.

\subsection{Legendre curves in \texorpdfstring{$\sphere{3} \subset \complexs^2$}{S³ ⊂ ℂ²}}\label{sec:legendre_S3}

Consider the unit three sphere embedded in $\complexs^2$. Recall that a curve $\gamma: I \to \sphere{3} \subset \complexs^2$ is called Legendre, or C-totally real, if and only if $\langle \gamma', i \gamma \rangle = 0$, see also \cite{chen1997}. If we suppose arc length parametrisation, then we also have $\langle \gamma', \gamma' \rangle = 1$, so that $\set{ \gamma, \gamma', i \gamma, i \gamma'}$ forms an orthonormal frame along $\gamma$. It follows that $\gamma$ satisfies the differential equation
\begin{equation}\label{eq:legendre_S3}
    \gamma'' = - \gamma + \kappa i \gamma',
\end{equation}
for some function $\kappa$, which may be thought of as a curvature. 

Conversely, suppose we have an arc length parametrised curve $\gamma: I \to \sphere{3}$ that satisfies \Cref{eq:legendre_S3}. Suppose as initial conditions that $\set{ \gamma, \gamma', i \gamma, i \gamma'}$ forms an orthonormal frame at one point of $\gamma$. Then, we can compute that
\begin{equation}\label{eq:system_converse_legendre_S3}
    \begin{cases}
        \langle \gamma,\gamma \rangle' &= 2 \langle \gamma,\gamma' \rangle \\
        \langle \gamma,\gamma' \rangle' &=  \langle \gamma',\gamma' \rangle - \langle \gamma,\gamma \rangle + \kappa_1 \langle \gamma, i \gamma' \rangle \\
        \langle \gamma,i \gamma' \rangle' &= \langle \gamma',i \gamma' \rangle - \langle \gamma, i \gamma \rangle  - \kappa_1 \langle \gamma,  \gamma' \rangle \\
        \langle \gamma',\gamma' \rangle' &= - 2 \langle \gamma',\gamma \rangle  + 2 \kappa_1 \langle \gamma', i \gamma' \rangle.
    \end{cases}
\end{equation} This is a first order homogeneous system of differential equations in the functions $\langle \gamma,\gamma \rangle$, $\langle \gamma,\gamma' \rangle$, $\langle \gamma,i \gamma' \rangle$ and $\langle \gamma',\gamma' \rangle$ and so has a unique solution, given initial conditions. As $\langle \gamma,\gamma \rangle=1$, $\langle \gamma,\gamma' \rangle=0$, $\langle \gamma,i \gamma' \rangle=0$ and $\langle \gamma',\gamma' \rangle=1$ is a trivial solution that agrees with the initial conditions, we find that this is the unique solution to \Cref{eq:system_converse_legendre_S3}. Hence, $\langle \gamma', i \gamma \rangle = 0$ for all time, and $\gamma$ is a Legendre curve in $\sphere{3}$.

\subsection{Legendre curves in \texorpdfstring{$\sphere{7} \subset \quaternions^2$}{S⁷ ⊂ ℍ²}} \label{sec:legendre_S7}

The well-known ideas of \Cref{sec:legendre_S3} can be generalised naturally to $\sphere{7} \subset \quaternions^2$. This hasn't been done in the literature, as far as the authors are aware. 

We define a Legendre curve in $\sphere{7}$ as a curve $\gamma: I \to \sphere{7} \subset \quaternions^2$ with $\langle \gamma', i \gamma \rangle = \langle \gamma', j \gamma \rangle = \langle \gamma', k \gamma \rangle = 0$. Again, supposing arc length parametrisation, we have that \[\set{ \gamma, i \gamma, j \gamma, k \gamma, \gamma', i \gamma', j \gamma', k \gamma'}\] forms an orthonormal frame along $\gamma$. It follows that $\gamma$ satisfies the differential equation
\begin{equation}\label{eq:legendre_S7}
    \gamma'' = - \gamma + \kappa_1 i \gamma'+ \kappa_2 j \gamma'+ \kappa_3 k \gamma',
\end{equation}
for some curvature functions $\kappa_1$, $\kappa_2$ and $\kappa_3$.

With the same argument as in \Cref{sec:legendre_S3}, we find that the converse is also true: given a curve $\gamma: I \to \sphere{7} \subset \quaternions^2$ that is arc length parametrised, satisfies \Cref{eq:legendre_S7} and with initial conditions that $\set{ \gamma, i \gamma, j \gamma, k \gamma, \gamma', i \gamma', j \gamma', k \gamma'}$ forms an orthonormal frame at one point of $\gamma$, then $\gamma$ has to be a Legendre curve of $\sphere{7}$.

%%%%%%%%%%%%%%%%%
\section{Examples of totally real surfaces}\label{sec:examples}
%%%%%%%%%%%%%%%%%

In this section, we present numerous examples of totally real surfaces of the nearly Kähler $\CP^3$. We will do so using lifts to $\sphere{7} \subset \reals^8 \cong \complexs^4$ under the Hopf fibration $\hpi$. Recall that totally real submanifolds of the Kähler $\CP^3$ can be horizontally lifted to $\sphere{7}$, see \cite{reckziegel1985}. If the submanifolds are not totally real for the Kähler $\CP^3$, the following lifts are not horizontal.

We remark that almost all examples are full submanifolds (they do not lie in any totally geodesic submanifolds of the nearly Kähler $\CP^3$). Indeed, in recent work \cite{lorenzo-naveiro2024}, the totally geodesic submanifolds of the nearly Kähler $\CP^3$ were classified. Apart from some almost complex spheres, only the Lagrangian $\reals P^3$ can occur as a totally geodesic submanifold of the nearly Kähler $\CP^3$. As introduced in \cite{liefsoens2024}, there is an angle function $\theta^L$ associated with all Lagrangians in the nearly Kähler $\CP^3$, and the totally geodesic $\reals P^3$ is situated at $\theta^L = 0$. If a totally real surface is immersed into a Lagrangian, the tangent space of the Lagrangian is necessarily spanned by $\set{U,V, JG(U,V)}$, so we can compute the angle function $\theta^L$. By considering when this angle function is identically zero, we can show that many examples are full. Unless stated explicitly below, all examples are full. 

\subsection{Extrinsically homogeneous examples}

In this section, we consider extrinsically homogeneous totally real surfaces.
Recall that these are submanifolds which are the orbits under a subgroup of the identity component of the isometry group. In the case of the nearly Kähler $\CP^3$, they are the orbits under subgroups of $\Sp(2)$. The following four families are flat and orbits under $\U(1) \oplus \U(1)$ subgroups of $\Sp(2)$. We take the orbit of the point $p_0 = \mqty(1 & 0 & 0 & 0)^T \in \complexs^4$. The last example is a $\SU(2)$ orbit, and is a sphere.

\begin{description}
  \item[Family 1] 
  For all parameters $\mu, \nu \in \reals$, consider the immersion 
    \begin{equation}
        \label{eq:EH_tori_fam1}
        \begin{aligned}
            &[0, 2 \pi] \times [0, 2 \pi] \to \CP^3: \\
            &(t,s) \mapsto \hpi\l( 
            \mqty(
            \cos{s} \cos{t}-\frac{\mu \nu  \sin{s} \sin{t}}{\sqrt{\nu ^2+1} \sqrt{1 + \mu^2+\nu ^2}} \\
            -\frac{\sin{s} \cos{t}}{\sqrt{1 + \mu^2+\nu ^2}} \\
            -\frac{\sqrt{\nu^2+1} \sin{s} \sin{t}}{\sqrt{1 + \mu^2+\nu ^2}} \\
            -\frac{\cos{s} \sin{t}}{\sqrt{\nu ^2+1}}
            ) 
            + i \mqty( 
            -\frac{\mu \sin{s} \cos{t}}{\sqrt{1 + \mu^2+\nu ^2}}-\frac{\nu  \cos{s} \sin{t}}{\sqrt{\nu ^2+1}} \\
            0 \\
            -\frac{\nu  \sin{s} \cos{t}}{\sqrt{1 + \mu^2+\nu ^2}} \\
            \frac{\mu \sin{s} \sin{t}}{\sqrt{\nu ^2+1} \sqrt{1 + \mu^2+\nu ^2}}
            ) \r). 
           \end{aligned}
        \end{equation}
  \item[Family 2] 
    Consider the parameter $\nu \in \reals$ and the following immersion: 
    % \begin{equation}
    %     \label{eq:EH_tori_fam2a}
    %         [0, 2 \pi] \times [0, 2 \pi] \to \CP^3: 
    %         (t,s) \mapsto \hpi
    %         \mqty(
    %         \frac{\cos (s)+\nu ^2 e^{-i t}}{\nu ^2+1} \\
    %          -\frac{\sin{s}}{\nu ^2+1} \\
    %          -\frac{i \nu  \sin{s}}{\nu ^2+1} \\
    %          \frac{i \nu  \left(\cos (s)-e^{-i t}\right)}{\nu ^2+1}
    %         ). 
    %     \end{equation}
        \begin{equation}
        \label{eq:EH_tori_fam2a}
            [0, 2 \pi] \times [0, 2 \pi] \to \CP^3: 
            (t,s) \mapsto \hpi
            \mqty(
            \frac{\cos (s)+\nu ^2 e^{-i t}}{\nu ^2+1} &
             -\frac{\sin{s}}{\nu ^2+1} &
             -\frac{i \nu  \sin{s}}{\nu ^2+1} &
             \frac{i \nu  \left(\cos (s)-e^{-i t}\right)}{\nu ^2+1}
            )^T. 
        \end{equation}
    \item[Family 3] 
    Consider the parameters $\mu, \nu \in \reals$, suppose $\mu \neq 0$ and let $f = \sqrt{\mu^2 + (1+\nu^2)^2}$. We have the following family of immersions: 
    \begin{equation}
        \label{eq:EH_tori_fam2b}
            [0, 2 \pi] \times [0, 2 \pi] \to \CP^3: 
            (t,s) \mapsto \hpi
            \mqty(
            \frac{\nu ^2 e^{-i \left(\frac{\mu s}{f}+t\right)}+\frac{\mu e^{-i s}}{f}}{\nu ^2+1} \\
         -\frac{\sin{s}}{f} \\
         -\frac{i \nu  \sin{s}}{f} \\
         -\frac{i \nu  \left(e^{-i \left(\frac{\mu s}{f}+t\right)}-\frac{\mu e^{-i s}}{f}\right)}{\nu ^2+1}
            ). 
        \end{equation}
    \item[Family 4] 
  Consider the parameters $\nu, \rho, \sigma \in \reals$, suppose $\sigma \neq 0$ and that $\mu$ is given by
  $8 \mu  \sigma +8 \nu ^2 \rho +2 \sqrt{2} \nu  \left(8 \rho ^2+8 \sigma ^2-1\right)-8 \rho = 0$.
  
  We have the following family of immersions: 
    \begin{equation}
        \label{eq:EH_tori_fam3}
            [0, 2 \pi] \times [0, 2 \pi] \to \CP^3:
            (t,s) \mapsto \hpi
            \mqty(
            e^{t X} e^{s Y} p_0
            ),
        \end{equation}
        where $X$ and $Y$ are the following matrices
        \begin{equation*}
            X = 
            \mqty(
            -\frac{i \nu }{\sqrt{2}} & 0 & 0 & \frac{1}{\sqrt{2}} \\
             0 & \frac{i \nu }{\sqrt{2}} & -\frac{1}{\sqrt{2}} & 0 \\
             0 & \frac{1}{\sqrt{2}} & 2 i \rho & 2 \sigma \\
             -\frac{1}{\sqrt{2}} & 0 & -2 \sigma & -2 i \rho         
            ), \qquad Y = \mqty(
            -i \mu & 1 & -i \nu  & 0 \\
             -1 & i \mu & 0 & i \nu  \\
             -i \nu  & 0 & i \left(\mu+2 \sqrt{2} \sigma \nu \right) & 1-\nu  \left(2 \sqrt{2} \rho+\nu \right) \\
             0 & i \nu  & 2 \sqrt{2} \rho \nu +\nu ^2-1 & -i \left(\mu+2 \sqrt{2} \sigma \nu \right)
            ).
        \end{equation*}
    \item[Sphere] 
    The following immersion gives a sphere in $\CP^3$:
    \begin{equation}\label{eq:ex_sphere_in_Lagrangian}
        [0, \pi] \times [0, 2 \pi ] \to \sphere{7} \subset \complexs^4 :   (u,v) \mapsto  \hpi \mqty( \cos u &
     \frac{\sqrt{2}}{\sqrt{3}} i e^{i v} \sin u &
    0 &
     \frac{1}{\sqrt{3}} e^{i v} \sin u
    )^T.
\end{equation}
\end{description}
%$h_2 = \rho$
%$h_3 = \sigma$

We now discuss each family in the following sections.

\subsubsection{Family 1}
The parameter $\mu$ can be recognised as $g(H, JV)$ for $\nu=0$ and with $H$ the mean curvature vector. Except for $\nu = 0$, all the resulting surfaces are of Type 0 with $\lambda=0$ and $\theta = 0$. For $\nu = 0$, the surfaces are of Type 2 with $\cos \alpha = 0$. When $\nu = \mu=0$, the example is not full.

We also compute the mean curvature vector with respect to the nearly Kähler metric ($H$), and the Kähler metric ($H_\circ$). We lift $H$ and $H_\circ$ at $\hpi(p_0)$ horizontally to have 
\begin{align*}
    H &= \mqty( 0 & \frac{h_1 \nu }{2 \nu ^2+1} & \frac{i h_1}{2 \nu^2+1} & \frac{i \nu ^3}{2 \nu ^2+1} )^T, &
    H_\circ &=  \mqty( 0 & \frac{h_1 \nu }{\nu ^2+1} & \frac{i h_1}{\nu^2+1} & i \nu )^T.
\end{align*}
Notice that the resulting surface is minimal (for either Kähler or nearly Kähler $\CP^3$) if and only if $\nu=h_1 = 0$.
Then, this surface is the Clifford torus in $\reals P^3$, and an embedding is given by 
\begin{equation}\label{eq:EH_tori_fam1_minimal_parallel}
    [0, 2 \pi] \times [0, 2 \pi] \to \sphere{7}: (t,s) \mapsto 
    \hpi \mqty(
    \cos{s} \cos{t} &
    - \sin{s} \sin{t} &
    - \sin{s} \cos{t} &
    - \cos{s} \sin{t}
    )^T.
\end{equation}
Notice that this is the only parallel example of the family of \Cref{eq:EH_tori_fam1}.

\subsubsection{Family 2}
All of the immersions within Family 2 are of Type 0 with $\lambda = \theta = 0$. 

We can again compute the mean curvature vectors to find 
\begin{align*}
    H &= \mqty( 0 &
 0 &
 0 &
 \frac{i \l(4 \nu ^4-\nu ^2-1\r)}{4 \nu \l(2 \nu ^2+1 \r)} )^T, &
    H_\circ &=  \mqty( 0 &
 0 &
 0 &
 i\l ( \nu -\frac{1}{2 \nu }\r) )^T, 
\end{align*}
so that the only minimal examples occur at $\nu = \pm \frac{1}{2 \sqrt{2}} \sqrt{1 + \sqrt{17}}$. Moreover, there are no parallel examples within this family. 

Notice that for $2\nu^2 =1$ (and only then), there is a totally real surface that is minimal in the Kähler $\CP^3$. Write the corresponding immersion of Family 2 by $\hpi \circ f$ where $f$ is given by the argument of $\hpi$ in \Cref{eq:EH_tori_fam2a}. Without loss of generality, take $\nu = 1/\sqrt{2}$. The parameter $\alpha$ of \Cref{thm:frame_TR_in_NKCP3} is equal to zero for this example. 
By adding a factor $\exp(i t/3)$ to $f$, we have a horizontal lift of the unique totally real surface of Family 2 that is minimal for the Fubini-Study metric. Let $v_1, v_2, v_3$ be the following orthogonal vectors in $\complexs^4$:
\begin{align*}
    v_1&= \frac{1}{3} \mqty(  1 & i & -\frac{1}{\sqrt{2}} & \frac{i}{\sqrt{2}} )^T & v_2&= \frac{1}{3} \mqty(  1 & -i & \frac{1}{\sqrt{2}} & \frac{i}{\sqrt{2}} )^T & v_3&= \frac{1}{3} \mqty(  1 & 0 & 0 & - i\sqrt{2} )^T.
\end{align*}
Then, after reparametrising, the horizontal lift $e^{i t/3} f$ can be written as
\begin{equation}\label{eq:unique_TR_flat_minimal_KCP2}
    e^{i u} v_1 + e^{i v} v_2 + e^{-i(u+v)} v_3, \qquad u = \frac{t}{3}+s, v = \frac{t}{3}-s.
\end{equation}
This can be recognised as the unique totally real and flat minimal torus in the standard $\CP^2$, and with $\CP^2$ totally geodesically embedded in $\CP^3$, see \Cite{ludden_totally_1975}. %\Cite[example 3.1 P350]{kon_structures_1984}. 

\subsubsection{Family 3}
All of the immersions within Family 3 are of Type 0 with $\lambda = \theta = 0$. 

The mean curvature vectors are given by
\begin{align*}
    H &= \mqty( 0 &
 \frac{\mu \nu }{2 \nu ^2+1} &
 \frac{i \mu}{2 \nu ^2+1} &
 \frac{i \l(4 \nu ^4-\nu ^2-1\r)}{4 \nu \l(2 \nu ^2+1 \r)} )^T, &
    H_\circ &=  \mqty( 0 &
 \frac{\mu \nu }{\nu ^2+1} &
 \frac{i \mu}{\nu ^2+1} &
 i\l ( \nu -\frac{1}{2 \nu }\r) )^T, 
\end{align*}
so that there are no minimal examples in this family, as $\mu \neq 0$. There are no parallel examples within this family. 

\subsubsection{Family 4}

If $\nu = 0$, then, and only then, the resulting surface is of Type 2 with $\cos \alpha = 0$, and otherwise it is again of Type 0 with $\lambda = \theta = 0$. 

The (horizontal lifts of the) mean curvature vectors at $\hpi(p_0)$ are 
\begin{align*}
    H &= \mqty( 0 &
 \frac{2 h_1 \nu -\sqrt{2} h_3}{4 \nu ^2+2} &
 \frac{i h_1}{2 \nu ^2+1} &
 -i\frac{2 \sqrt{2} h_2-4 \nu ^3+\nu}{8 \nu ^2+4} )^T, &
    H_\circ &=  \mqty( 0 &
 \frac{h_1 \nu -\sqrt{2} h_3}{\nu ^2+1} &
 \frac{i h_1}{\nu ^2+1} &
 i \frac{-2 \sqrt{2} h_2+2 \nu ^3+\nu}{2 \l(\nu^2+1\r)} )^T,
\end{align*}
so that this family does not include any minimal examples (not for Kähler, nor nearly Kähler $\CP^3$). 

However, this family does include parallel examples (surfaces with $\nabla h=0$). They occur exactly when $\sigma \neq 0$ and $\nu = \rho = \mu =0$. All these examples are not full. Let $\kappa = -2 \sqrt{2} \sigma \neq 0$ and $f(\kappa) = 1 + \frac{1}{2} \kappa  \l (\sqrt{\kappa ^2+4}+\kappa \r)$. Notice that $f(\kappa)>0$ and $f(\kappa) \neq 1$, as $\kappa \neq 0$.
Then, explicitly, the parallel examples of \Cref{eq:EH_tori_fam3} are given by 
\begin{align}\label{eq:EH_tori_fam3_parallel}
    [0, 2 \pi] \times [0, 2 \pi] \to \reals P^3 \subset \CP^3: (t,s) \mapsto
    & \hpi( e^{-j s} \gamma_\kappa(t)),
\end{align}
where
\begin{equation*}
    \gamma_\kappa(t) = 
    \frac{1}{1+f(\kappa)}
    \mqty(
    \cos (t f(\kappa ))+f(\kappa ) \cos (t) \\
     \frac{\kappa  \sqrt{f(\kappa )} (\sin (t f(\kappa ))-f(\kappa ) \sin (t))}{f(\kappa )-1} \\
     \frac{\kappa  f(\kappa ) (\cos (t)-\cos (t f(\kappa )))}{f(\kappa )-1} \\
     \frac{\left(1 -f(\kappa )+\kappa ^2\right) f(\kappa )^2 \sin (t)-\left(\kappa ^2 f(\kappa )+f(\kappa )-1\right) \sin (t f(\kappa ))}{(f(\kappa )-1) \sqrt{f(\kappa )}}
    ) \in \sphere{3}(1) \subset \sphere{7}(1),
\end{equation*}
and $e^{j s} = \cos{s}+ j \sin{s}$.

Notice how $s \mapsto e^{-j s}\mqty(1,0,0,0)^T$ is a Legendre curve in $\sphere{3}$, if we consider $\complexs \cong \reals \oplus \reals j$. Moreover, the curve $t \mapsto \gamma_\kappa(t/ \sqrt{f(\kappa)}) = \tilde{\gamma}_\kappa(t)$ is arc length parametrised and satisfies
\[ \tilde{\gamma}_\kappa''(t) = - \tilde{\gamma}_\kappa(t) + \kappa \, j \tilde{\gamma}_\kappa'(t),  \]
so that this (and its reparametrisation $\gamma_\kappa$) is also a Legendre curve (both in $\sphere{7}$ and $\sphere{3}$ after identification $\complexs \cong \reals \oplus \reals j$) with constant curvature $\kappa$. 

\subsubsection{Sphere}
Recall the Lagrangian $\sphere{2}\times \sphere{1}$ in the nearly Kähler $\CP^3$ that is CR for Kähler $\CP^3$. From the explicit immersion of \cite{liefsoens2024} for $t=0$, we find the immersion stated in \Cref{eq:ex_sphere_in_Lagrangian}. It is non-minimal and of Type 2 with $\sin \alpha = 0$ and thus almost complex for the Kähler $\CP^3$ (see \Cref{thm:Kahler_TR_AC_type}). Moreover, it is totally geodesic for Kähler $\CP^3$ and totally umbilical in nearly Kähler $\CP^3$.

\subsection{An example that is totally geodesic for Kähler \texorpdfstring{$\CP^3$}{ℂP³}}

Another example that is totally real in nearly Kähler $\CP^3$ and totally geodesic in Kähler $\CP^3$, apart from \Cref{eq:ex_sphere_in_Lagrangian}, is the following for all $c\in \reals$ fixed:
\begin{equation}\label{eq:ex_RP2_in_RP3}
     \reals P^2 \to \reals P^3 \subset \CP^3 : [(y_1, y_2, y_3)] \mapsto 
    \hpi \mqty( 
    y_1 & 
    y_2 & 
    \cos{c}\; y_3 & 
    \sin{c}\; y_3
    )^T,
\end{equation}
where $\sphere{2} \subset \reals^3 \to \reals P^2: (y_1, y_2, y_3) \mapsto [(y_1, y_2, y_3)]$ denotes the standard projection. 
This example is simply $\reals P^2 \subset \reals P^3$ and is of Type 0 with $\lambda = 0$. This example is not full.

\subsection{A large class of Codazzi-like examples}\label{sec:examples_CL}

In this section, we consider Codazzi-like (or Codazzi for short) totally real surfaces. Codazzi surfaces are those for which $\nabla h$ is totally symmetric, and is a generalisation of parallel ($\nabla h = 0$) and totally geodesic submanifolds ($h=0$).

Suppose $I$ is some interval containing zero. Let $I \to \sphere{3} \subset \complexs^2 : v \mapsto (f_1(v) + f_2(v) i , f_3(v) + f_4(v) i )$ be an arc length parametrised Legendre curve with curvature function $\kappa(v)$, and denote by $K(v)$ the integral $K(v) = \int_0^v \kappa(t) \,\dd t$. Suppose that for $v=0$, we have $f_1(0)-1=f_2(0)= f_3(0)= f_4(0)=0$, as well as $f_1'(0)=0=f_2'(0)= f_3'(0)-1= f_4'(0)=0$ .
Identifying $\complexs^2$ and $\quaternions$, we will write $f: I \to \sphere{3} \subset \quaternions: v \mapsto f_1(v) + f_2(v) i + f_3(v)j + f_4(v)k$. Let $\gamma : I \to \sphere{7} \subset \quaternions^2$ also be an arc length parametrised Legendre curve. 
Given these choices, define the immersion
\begin{equation}\label{eq:all_CL_TR}
    F : I \times I \to \sphere{7} : (u,v) \mapsto f(v) \gamma(u). 
\end{equation} We claim that with the appropriate identification of $\complexs^4$ with $\quaternions^2$, this gives rise to a totally real Codazzi surface in nearly Kähler $\CP^3$.

Write $F_u$ for $\pdv{F}{u}$, and similarly for $v$. Define the operator $j$ by
\begin{align*}
    j F &= e^{i K(v)} F_v, &
    j F_u &= e^{i K(v)} F_{uv},  &
    j F_v &= - e^{i K(v)} F, &
    j F_{uv} &=  - i e^{i K(v)} F_u.
\end{align*}
It takes a simple computation to show that $j$ is parallel, and so that it indeed comes from a quaternionic structure on $\complexs^4$. Hence, we can identify $\complexs^4$ with $\quaternions^2$ such that $j$ acts as above.

Notice that $\langle F_u, i F \rangle = \langle \gamma'(u), \overline{f}if \gamma \rangle = 0$, since $\overline{f}if \in \Im \quaternions$ and $\gamma$ is a Legendre curve. With a similar argument, $\langle F_v, i F \rangle=0$, so that $F$ is horizontal with respect to the Hopf fibration $\hpi$. By construction of $j$, the vector field $F_v$ lies in $\tilde{\mathcal{D}}_1^2$, so that $J \dd \hpi(F_v) = \dd \hpi(- i F_v) $. Moreover, $F_u$ lies in the span of $F_{uv}$ and $i F_{uv}$. Since $\langle F, F \rangle = 1$ and $\langle F, i F \rangle = 0$, it follows that $F_u$ is orthogonal to $F_v$ and $i F_v$. It then also follows that $J \dd \hpi(F_u) = \dd \hpi(i F_u) $. With this information, it is easy to show that $\hpi \circ F$ gives a totally real surface in nearly Kähler $\CP^3$, as well as in Kähler $\CP^3$. 

We finish this discussion by showing that $\hpi \circ F$ is Codazzi in nearly Kähler $\CP^3$. Notice that $h(F_u,F_u) = F_{uu}$, $h(F_u,F_v) = F_{uv}$ and $h(F_v,F_v) = F_{vv}$. Since coordinate derivatives commute, we then immediately find that $\hpi \circ F$ is Codazzi. 

Finally, we note that all of these examples are of Type 2 with $\cos \alpha= 0$. Moreover, they are all full, except for the case where the image of $\gamma$ lies in $\sphere{3}$ and $\kappa$ is constant. When $\Im \gamma \subset \sphere{3}$, has constant curvature, and with $\kappa = 0$, then these are congruent to the parallel examples of \Cref{eq:EH_tori_fam3_parallel}.

%%%%%%%%%%%%%%%%%
\section{Nearly Kähler totally real, Kähler almost complex surfaces}
%%%%%%%%%%%%%%%%%

Suppose $M$ is not only totally real for nearly Kähler $\CP^3$, but also almost complex for Kähler $\CP^3$. From \Cref{thm:Kahler_TR_AC_type}, we know that $M$ has to be of Type 2 with $\sin\alpha = 0$. The next result shows there is a unique such $M$. 

\begin{theorem}\label{thm:classification_AC_KCP3}
    A totally real surface of nearly Kähler $\CP^3$ that is also almost complex for Kähler $\CP^3$ has to be congruent to example \eqref{eq:ex_sphere_in_Lagrangian}. In particular, it is the $\CP^1 \cong \sphere{2}$ inside the Lagrangian $\sphere{2} \times \sphere{1}$ with Lagrangian angle $\theta^L = \pi/4$. 
\end{theorem}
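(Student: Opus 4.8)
The plan is to show that Type~2 with $\sin\alpha=0$ forces essentially all of the geometry, then invoke the existence/uniqueness framework to identify the surface with \eqref{eq:ex_sphere_in_Lagrangian}. By \Cref{thm:frame_TR_in_NKCP3}, a Type~2 surface with $\sin\alpha=0$ has $b_1=\cos\alpha=1$, $b_2=c_3=b_4=b_5=c_4=c_5=0$, so on the tangent space $\Jo U=V$ and $\Jo V=-U$; that is, $\Jo$ maps $TM$ to itself and the surface is almost complex for Kähler $\CP^3$, as predicted by \Cref{thm:Kahler_TR_AC_type}. First I would differentiate the relations $\Jo U=V$, $\Jo V=-U$ using the second equation of \Cref{eq:derivatives_P_J0}, i.e. $(\nabla\Jo)(X,Y)=\tfrac{J+\Jo}{2}G\l(\tfrac{J-\Jo}{2}X,Y\r)$, together with the Gauss formula and the explicit $G$-multiplication table established after \Cref{eq:frame_TR_surface}. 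Because on $TM$ we have $\Jo=J$ precomposed with the tangential rotation, the operator $\tfrac{J-\Jo}{2}$ annihilates $TM$, which should make many of these derivative terms vanish and tightly constrain the connection coefficients $d_1,d_2$ and the components $k_i,m_i,k,m$ of $h$.

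Next I would feed these constraints into the structure equations. The key is that the ambient $\Jo$ being parallel along $M$ in the tangential directions (up to the controlled $G$-correction) forces the second fundamental form to have a very rigid form; I expect to find that $h$ is determined up to finitely many constants, and that the mean curvature and the normal connection are pinned down. Since \eqref{eq:ex_sphere_in_Lagrangian} is already known (from the Family discussion) to be of Type~2 with $\sin\alpha=0$, totally geodesic for Kähler $\CP^3$ and totally umbilical for nearly Kähler $\CP^3$, the task reduces to checking that the general Type~2, $\sin\alpha=0$ surface must share exactly this second fundamental form, normal connection, and curvature data. I would verify that the Gauss, Codazzi, and Ricci equations, combined with the derived constraints on $h$, admit only this solution, so that the abstract data $(\nabla^{\mathcal E},h^{\mathcal E},A^{\mathcal E},R^{\mathcal E})$ coincides with that of \eqref{eq:ex_sphere_in_Lagrangian}.

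Finally I would apply the uniqueness statement (the uniqueness half of the Conjecture, which in this concrete situation can be established directly by the ad hoc moving-frame argument referenced in the text, or by lifting to $\sphere{7}$ via the Hopf fibration as in \Cref{sec:examples}): two totally real immersions into nearly Kähler $\CP^3$ with matching second fundamental forms and normal data are congruent. Because the surface is almost complex for Kähler $\CP^3$, its horizontal lift to $\sphere{7}$ is a holomorphic-type curve, and matching the induced data identifies it with the lift of \eqref{eq:ex_sphere_in_Lagrangian}; the identification of the resulting surface with $\CP^1\cong\sphere{2}$ sitting inside the Lagrangian $\sphere{2}\times\sphere{1}$ at angle $\theta^L=\pi/4$ then follows from the explicit description in \cite{liefsoens2024}. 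The main obstacle I anticipate is the middle step: showing that the integrability of $\Jo$ on $TM$ does not merely constrain but \emph{rigidifies} $h$ down to a single totally umbilical solution, since a priori one might fear a one-parameter family; the subtlety is to extract enough independent equations from $(\nabla\Jo)$ and the curvature identity \eqref{eq:nablaG_curv} to eliminate every free function, and to rule out the spurious branches where $d_1,d_2$ or the off-diagonal components of $h$ could survive.
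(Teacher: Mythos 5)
Your overall skeleton (constrain $h$ via the $\nabla\Jo$ identity, feed this into the fundamental equations, then identify the surface with \eqref{eq:ex_sphere_in_Lagrangian}) is the same as the paper's, but two of your key claims are false and the final congruence step has a genuine gap. First, the operator $\tfrac{J-\Jo}{2}$ does \emph{not} annihilate $TM$. For Type 2 with $\alpha=0$ one has $\Jo U=V$ tangent, while $JU=e_3$ is \emph{normal} (the surface is totally real for $J$), so $(J-\Jo)U=e_3-V\neq 0$ and $(J-\Jo)V=e_4+U\neq 0$. The terms $G\l(\tfrac{J-\Jo}{2}X,Y\r)$ in \Cref{eq:derivatives_P_J0} therefore do not vanish; on the contrary, they are exactly what produces the constraints. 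The paper applies \Cref{eq:derivatives_P_J0} to $(U,U)$ and $(V,V)$ and obtains $k_1=k_2=m_1=m_2=0$, $k=m_4=-k_4$ and $m-\tfrac12=-1-m_3=k_3$: the inhomogeneous constants here come precisely from the $G$-terms you expected to kill. The Ricci equation for $U,V,G(U,V),JG(U,V)$ then gives $(1+2k_3)^2+4k_4^2=0$, forcing $k_3=-\tfrac12$, $k_4=0$, hence totally umbilical, a sphere of Gaussian curvature $3$, and (checking the Kähler second fundamental form) a totally geodesic $\CP^1$ for Kähler $\CP^3$. Your computation, done honestly, could recover this, but the stated mechanism for the simplification is wrong, and the "spurious branches" you worry about are in fact eliminated by the Ricci equation, not by vanishing of the $G$-correction.

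Second, the congruence step. You invoke the uniqueness half of the Conjecture, which the paper explicitly states is \emph{unproven}, and your fallback via horizontal lifting fails here: by Reckziegel (as recalled in \Cref{sec:examples}), horizontal lifts to $\sphere{7}$ exist for submanifolds that are totally real for \emph{Kähler} $\CP^3$; a surface that is almost complex for Kähler $\CP^3$ admits no horizontal lift, so there is no "holomorphic-type curve" upstairs to compare. What the paper actually does is an explicit normalization: since the surface is a Kähler totally geodesic $\CP^1$, it is swept out by Kähler geodesics through a point; these are lifted (non-horizontally) to circles in $\sphere{7}$, and then the homogeneity and isotropy of nearly Kähler $\CP^3$ — whose isometry group is $\Sp(2)$, strictly smaller than $\SU(4)$ — together with the normalization of $G(U,V)$ and the requirement that $U$ have the correct $g$-length, pin down the initial frame up to congruence, yielding exactly \eqref{eq:ex_sphere_in_Lagrangian}. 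This is the substantive point your proposal leaves unargued: $\SU(4)$-congruence of totally geodesic $\CP^1$'s is classical, but the theorem asserts $\Sp(2)$-congruence, i.e.\ that the position of the $\CP^1$ relative to the almost product structure $P$ can always be standardized, and that requires the explicit argument.
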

\begin{proof}
    According to \Cref{thm:Kahler_TR_AC_type}, this surface can only occur in Type 2 with $\sin \alpha=0$. By interchanging $U$ and $V$, we can assume $\alpha = 0$. From \Cref{eq:derivatives_P_J0} applied to $U,U$ and $V,V$, we find $k_1=k_2=m_1=m_2=0$; $k=m_4=-k_4$; and $m-1/2=-1-m_3 = k_3$. We then look at the Ricci equation for $U,V,G(U,V)$ and $JG(U,V)$ to find $(1+2 k_3)^2 + 4 k_4^2 = 0$, from which it follows $k_3 = -1/2$ and $k_4=0$. Notice that then $M$ is totally umbilical. The remaining Gauss equation tells us this has to be a sphere of Gaussian curvature $3$. Moreover, considering the second fundamental form in Kähler $\CP^3$, as well its curvature there, we find that this is a totally geodesic $\CP^1$. 

    We now show that from the above data, there is only one option, up to congruence. 
    Let $p_0$ be any point in $\CP^3$ and $q_0$ be a point of $\sphere{7}$ such that $\hpi(q_0) = p_0$. Let $U_0 = \frac{2}{\sqrt{3}} U$, the tangent space at $p_0$ is spanned by the $\FSm$-orthonormal frame $\set{ U_0, \Jo U_0 }$. Let $\tilde{U}_0$ be the horizontal lift of $U_0$ through $q_0$. Take any geodesic through $p_0$ with tangent vector $\cos v \, {(U_0)}_{p_0} + \sin v \, {(\Jo U_0)}_{p_0}$ and lift this to $\sphere{7}$, so that we obtain the surface $(u,v) \mapsto f(u,v) = \cos u \, q_0 + \sin(u) \,( \cos v \, (\tilde{U}_0)_{q_0} +\sin v \, i (\tilde{U}_0)_{q_0} )$. We know that $\sqrt{2} G(U,V)$ lies in $\Dfour$ and is of unit $\FSm$-length. Hence, we can use the homogeneity of $\CP^3$ to suppose that $q_0 = (1,0)\in \quaternions^2$, and the isotropy to suppose that $\sqrt{2} i \tilde{G(U,V)}_{q_0}  = (0,1)\in \quaternions^2$. Using the isotropy a final time, we can suppose that $\tilde{U}_0$ has no component in $(k,0)$. Then, we can introduce angles $\theta_1$ and $\theta_2$ such that $(\tilde{U}_0)_{q_0} = ( \cos\theta_1 \, j, \sin \theta_1 ( \cos \theta_2 \, j + \sin \theta_2 \, k ) )$, as $U_0$ has to be orthogonal to $G(U,V)$ and $\Jo G(U,V)$. Demanding that the resulting $U_0 = \dd \hpi(\partial_u f)$ has the correct length for $g$, we find that $\sqrt{3} \cos \theta_1 = \sqrt{6} \sin\theta_1 = \sqrt{2}$. Next, we use that for tangent vectors $U,V$, it holds that $2 G(U,V) = (\FSnabla J)(U,V)-(\FSnabla J)(V,U)$ (see \cite{liefsoens2024}) to find that with $\sqrt{2} i \tilde{G(U,V)}_{q_0}  = (0,1)$, we need $\theta_2 = -\pi/2$. Hence, we get that $f$, up to congruence, has to be $f(u,v) = \cos u \, (1,0) + \sin u \,e^{i v} \frac{1}{\sqrt{3}} ( \sqrt{2} \, j, - k )$. We can always translate the coordinates, and we apply the transformation $v\mapsto v + \pi/2$ to obtain $f(u,v) = \cos u \, (1,0) + \sin u \,e^{i v} \frac{1}{\sqrt{3}} ( \sqrt{2} \, k, j )$. 
    This is exactly the immersion of \Cref{eq:ex_sphere_in_Lagrangian}.
\end{proof}

%%%%%%%%%%%%%%%%%
\section{Totally real surfaces that are totally geodesic for Kähler \texorpdfstring{$\CP^3$}{ℂP³}}
%%%%%%%%%%%%%%%%%
\begin{theorem}\label{thm:tg_in_KCP3}
    Let $M$ be a totally real surface of nearly Kähler $\CP^3$ that is also totally geodesic for Kähler $\CP^3$. If $M$ is non-minimal, it has to be congruent to example \eqref{eq:ex_sphere_in_Lagrangian}, i.e. a specific $\CP^1$. If $M$ is minimal (in nearly Kähler $\CP^3$), it corresponds to example \eqref{eq:ex_RP2_in_RP3}, which is a specific $\reals P^2$. 
\end{theorem}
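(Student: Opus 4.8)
The plan is to push the second fundamental form through the relation $\nabla_X Y = \FSnabla_X Y + D(X,Y)$ between the nearly Kähler and Fubini--Study connections, where $D(X,Y) = -\tfrac14(J+\Jo)G(PX,Y)$. Being totally geodesic for $\FSm$ means $\FSnabla_X Y$ stays tangent to $M$, so the nearly Kähler second fundamental form is forced to equal the $g$-normal part of the deformation tensor, $h(X,Y) = D(X,Y)^\perp$. In particular $h$ is completely determined by the functions $b_i,c_i$ that describe $\Jo$ in the frame \eqref{eq:frame_TR_surface}. I would also use that a totally geodesic surface of the Kähler $\CP^3$ is an open part of a totally geodesic $\CP^1$ or $\reals P^2$, hence is almost complex, respectively totally real, for $\Jo$; by \Cref{thm:Kahler_TR_AC_type} the former is Type 2 with $\sin\alpha=0$, while the latter is Type 0 with $\lambda=0$ or Type 2 with $\cos\alpha=0$.

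The almost complex alternative is immediate: \Cref{thm:classification_AC_KCP3} already shows such an $M$ is congruent to \eqref{eq:ex_sphere_in_Lagrangian}, and that surface is non-minimal, which is exactly the non-minimal case of the statement.

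For the totally real alternative I would first eliminate Type 2 with $\cos\alpha=0$. There $PU=U$, $PV=-V$, so $D^\perp$ gives $h(U,U)=h(V,V)=0$ and $h(U,V)=-\tfrac12 JG(U,V)$. Substituting this together with $\Jo=JP$ and the covariant derivatives $\nabla_X e_i$ recorded in \Cref{sec:frame} into the identity $(\nabla\Jo)(X,Y)=\tfrac{J+\Jo}{2}G(\tfrac{J-\Jo}{2}X,Y)$ of \eqref{eq:derivatives_P_J0}, evaluated at $(U,V)$, yields a nonzero multiple of $G(U,V)$ where the right-hand side vanishes, a contradiction. Hence only Type 0 with $\lambda=0$ remains. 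Computing $D^\perp$ there gives $h(U,U)=-\tfrac12\cos\alpha\cos\theta\,JV$ and $h(V,V)=0$, while the $JU$-component of $h(U,V)$ equals $\tfrac14\cos\alpha\cos\theta$; since $(X,Y,Z)\mapsto g(h(X,Z),JY)$ is totally symmetric, the two expressions for the common value $g(h(U,U),JV)=g(h(U,V),JU)$ must coincide, forcing $\cos\alpha\cos\theta=0$. As $\alpha\in\,]-\tfrac{\pi}{2},\tfrac{\pi}{2}[$ on Type 0, this means $\cos\theta=0$, and then $H=h(U,U)+h(V,V)=0$, so $M$ is minimal.

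It remains to identify this last surface. With $\lambda=0$ and $\cos\theta=0$ fixed, the surviving Gauss, Codazzi and $\nabla\Jo$ equations determine the frame data up to congruence, and since example \eqref{eq:ex_RP2_in_RP3} --- a totally geodesic $\reals P^2\subset\reals P^3$ --- is itself totally geodesic for $\FSm$ and of Type 0 with $\lambda=0$, it realises precisely this data; I would conclude $M\cong$ \eqref{eq:ex_RP2_in_RP3}. Since the totally real branch is always minimal and the almost complex branch is non-minimal, the minimal/non-minimal split of the statement is recovered. I expect the main obstacle to be exactly this final identification: because the general existence and uniqueness result is only conjectural here, the match to \eqref{eq:ex_RP2_in_RP3} has to be made by hand --- most safely by writing down the horizontal lift explicitly, as in the proof of \Cref{thm:classification_AC_KCP3} --- with the frame bookkeeping needed to evaluate $D^\perp$ and $(\nabla\Jo)(U,V)$ as the secondary source of effort.
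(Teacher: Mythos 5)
Your proposal is correct, and its computational core is equivalent to the paper's, but the two arguments are organized differently. The paper does not invoke the classification of totally geodesic submanifolds of the Fubini--Study $\CP^3$ until the last step of its Type 0 analysis: it runs through Types 0, 1 and 2 of \Cref{thm:frame_TR_in_NKCP3} one by one, killing Type 1 by exhibiting two incompatible components of the Kähler second fundamental form $h_0$, forcing $\sin\alpha=0$ in Type 2 via the $\nabla\Jo$-identity of \eqref{eq:derivatives_P_J0} (which in particular excludes your subcase $\cos\alpha=0$), and deriving first $\cos\theta=0$ and then $\lambda=0$ in Type 0 from the symmetry of $h_0$. You instead put the classical fact that a totally geodesic surface of Kähler $\CP^3$ is an open piece of $\CP^1$ or $\reals P^2$ at the very start, so that \Cref{thm:Kahler_TR_AC_type} lets you skip Type 1 entirely and obtain $\lambda=0$ for free; what this buys is a shorter case analysis, at the price of importing an external classification that the paper only needs for the final identification anyway. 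Your individual steps check out: $h=D^\perp$ is indeed what total geodesy for $\FSnabla$ gives; in Type 2 with $\alpha=\pi/2$ one computes $(\nabla_U\Jo)V=-2\,G(U,V)+2d_1\,JU$ against a vanishing right-hand side of \eqref{eq:derivatives_P_J0}, a genuine contradiction; and in Type 0 with $\lambda=0$ the cubic-form symmetry $g(h(U,U),JV)=g(h(U,V),JU)$ becomes $-\tfrac12\cos\alpha\cos\theta=\tfrac14\cos\alpha\cos\theta$, forcing $\cos\theta=0$ and hence $h(U,U)=h(V,V)=0$, i.e.\ minimality --- this is the same computation the paper performs with the components of $h_0$. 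One caution on your last step: the phrase that the surviving equations ``determine the frame data up to congruence'' is not by itself a proof of congruence, since passing from frame data to congruence is exactly the conjectural existence-and-uniqueness statement of the paper; but you correctly fall back on the explicit identification by hand, which is also what the paper does (reduce to an $\SU(4)$-congruence with $\reals P^2$, lift horizontally to $\sphere{5}\subset\sphere{7}$, and fix the quaternionic structure), so your proof is complete at the same level of detail as the paper's.
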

\begin{proof}
    We consider each type separately.  For Type 1, we find the following false system of equations: $0 = g(h_0(U,V), JV) = m_1 - \frac{1}{4 \sqrt{2}}$ and $0 = g(h_0(V,V), JU) = m_1 + \frac{1}{2 \sqrt{2}}$. Hence, there is no such surface of Type 1. 

    For Type 2, the vanishing of the second fundamental form for Kähler $\CP^3$ implies that $k_1=k_2=k_4=m_1=m_2=m_4=k=0$, $m_3 = k_3= -\cos \alpha /2$ and $m=-\sin \alpha/2$. Using \Cref{eq:derivatives_P_J0}, we require 
    \[ 0 = g( (\nabla \Jo)(e_1,e_2)+(\nabla \Jo)(e_2,e_1) - G(\mathcal{P}_1 e_1, \mathcal{P}_2 e_2)-G(\mathcal{P}_1 e_2, \mathcal{P}_2 e_1), e_5)  = -2 \sin \alpha. \]
    From \Cref{thm:Kahler_TR_AC_type}, it follows that $M$ is almost complex for Kähler $\CP^3$, and \Cref{thm:classification_AC_KCP3} implies that $M$ is then congruent to \Cref{eq:ex_sphere_in_Lagrangian}.

    We finish the proof with assuming $M$ is of Type 0. From $0 = g(h_0(U,U), JV)-g(h_0(U,V), JU)$, we find that $\cos \theta = 0$, where we used that $\cos \alpha \neq 0$. Then, from $0 = g(h_0(U,V), JV)-g(h_0(V,V), JU)$, it follows that $\lambda = 0$. From \Cref{thm:Kahler_TR_AC_type}, it follows that $M$ is totally real for Kähler $\CP^3$. Since it is also totally geodesic, it has to be ($\SU(4)$-)congruent to an $\reals P^2$. Lifting this horizontally to $\sphere{5}$, which is totally geodesically embedded in $\sphere{7}$, it only remains to fix the quaternion structure on $\sphere{7}$ such that the surface has the correct properties with respect to nearly Kähler $\CP^3$. Then, we find that it has to be congruent to \Cref{eq:ex_RP2_in_RP3}.
\end{proof}

%%%%%%%%%%%%%%%%%
\section{Extrinsically homogeneous totally real surfaces}
%%%%%%%%%%%%%%%%%

In this section, we will prove the following theorem.
\begin{theorem}\label{thm:EH}
    Suppose $M$ is a totally real surface of nearly Kähler $\CP^3$ that is also extrinsically homogeneous. Then, one of the following must hold.
    \begin{itemize}
        \item $M$ is totally real for Kähler $\CP^3$, a flat torus and locally congruent to one of \Cref{eq:EH_tori_fam1,eq:EH_tori_fam2a,eq:EH_tori_fam2b,eq:EH_tori_fam3}, or
        \item $M$ is almost complex for Kähler $\CP^3$ and thus locally congruent to \Cref{eq:ex_sphere_in_Lagrangian}. 
    \end{itemize}
    Furthermore, if $M$ is minimal, it is locally congruent to the Clifford torus in $\reals P^3$, i.e. \Cref{eq:EH_tori_fam1} for $h_1 = \nu=0$, or to  \Cref{eq:EH_tori_fam2a} for $\nu=\pm \frac{1}{2 \sqrt{2}} \sqrt{1 + \sqrt{17}}$. If, instead, $M$ is minimal for Kähler $\CP^3$, it is either the Clifford torus, or the torus of \Cref{eq:unique_TR_flat_minimal_KCP2}. 
\end{theorem}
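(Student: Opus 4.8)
The plan is to exploit the frame and type classification of \Cref{thm:frame_TR_in_NKCP3} together with extrinsic homogeneity, which forces all the invariant functions ($b_i$, $c_i$, the $k_i$, $m_i$, $k$, $m$, and $\alpha$, $\lambda$, $\theta$) to be constant along $M$. First I would argue that Type 1 cannot occur: as noted in the remark following \Cref{thm:frame_TR_in_NKCP3}, every natural additional constraint on a Type 1 surface leads to an overdetermined, contradictory system, and extrinsic homogeneity (constancy of all structure functions) is exactly such a constraint. So I would substitute the constant Type 1 data into the Codazzi and Ricci equations and derive a numerical contradiction, ruling out Type 1 entirely.

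Next I would treat Type 2. Here $b_4=b_5=c_4=c_5=0$ and only the function $\alpha$ survives, which homogeneity forces to be constant. Using \Cref{eq:derivatives_P_J0} for $\Jo$ together with the Gauss and Codazzi equations, I expect the constancy of $\alpha$ to collapse the system so that either $\sin\alpha=0$ (the almost-complex-for-Kähler case, landing on \eqref{eq:ex_sphere_in_Lagrangian} via \Cref{thm:Kahler_TR_AC_type} and \Cref{thm:classification_AC_KCP3}) or $\cos\alpha=0$ (the totally-real-for-Kähler case). The remaining intermediate values of $\alpha$ should be excluded by comparing the resulting constant second fundamental form against the Gauss and Ricci constraints, which will over-determine the $k_i,m_i$.

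The bulk of the work is Type 0 with $\lambda,\alpha,\theta$ all constant. The strategy is to impose constancy of the frame functions in the full set of structure equations: the Gauss formula derivatives listed before the Lemma, the second equation of \Cref{eq:derivatives_P_J0}, and the Gauss/Codazzi/Ricci equations obtained from the curvature tensor $R$ displayed in \Cref{sec:description}. Homogeneity means $\nabla h$ has constant components, so the Codazzi equations become algebraic relations among the constants $k_i,m_i,k,m,d_1,d_2$ and the $\Jo$-data. I would solve this algebraic system, expecting it to pin down $\lambda=\theta=0$ (matching \Cref{thm:Kahler_TR_AC_type}, so $M$ is totally real for Kähler $\CP^3$) and to leave a two-parameter family of admissible constant second fundamental forms. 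Realising each solution as an actual orbit is then done by matching against the explicit immersions \eqref{eq:EH_tori_fam1}--\eqref{eq:EH_tori_fam3}: since these are exhibited as $\U(1)\oplus\U(1)$ (or $\SU(2)$) orbits with their types already computed in \Cref{sec:examples}, it suffices to check that the invariants $(\lambda,\theta,\alpha)$ and the mean-curvature/second-fundamental-form data of each algebraic solution coincide with one of the families, invoking the uniqueness part of the rigidity (the Conjecture, or an ad hoc existence-uniqueness argument as used elsewhere) to conclude local congruence.

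The main obstacle will be the Type 0 algebra: the combined Gauss--Codazzi--Ricci system in the constants $k_i,m_i,k,m,d_1,d_2$ coupled to $\lambda,\theta$ is large, and cleanly extracting $\lambda=\theta=0$ plus the exact parameter correspondence with Families 1--4 requires careful case splitting (e.g. on whether the mean curvature component $g(H,JV)$ or certain $k_i$ vanish). The minimality refinements at the end follow once the classification is in hand: setting $H=0$ in the constant data and reading off the mean-curvature formulas already computed per family isolates the Clifford torus (\eqref{eq:EH_tori_fam1} with $h_1=\nu=0$) and the \eqref{eq:EH_tori_fam2a} value $\nu=\pm\tfrac{1}{2\sqrt{2}}\sqrt{1+\sqrt{17}}$; imposing $H_\circ=0$ instead and using the Family 2 discussion at $2\nu^2=1$ yields the flat minimal torus \eqref{eq:unique_TR_flat_minimal_KCP2}.
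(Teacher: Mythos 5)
Your overall strategy is legitimate in its first half: extrinsic homogeneity does force the scalar invariants of the canonical frame of \Cref{thm:frame_TR_in_NKCP3} to be constant (modulo care in the degenerate cases where $J\Jo$ restricted to $TM$ is a multiple of the identity and the frame retains rotational freedom), and ruling out Type 1 and intermediate Type 2 by plugging constant data into the fundamental equations is plausible, even though the remark you lean on is heuristic and you would still have to carry out that computation. The genuine gap is in the final step, where you pass from ``the constant Gauss--Codazzi--Ricci data of my algebraic solution coincides with that of Family 1--4 (or the sphere)'' to ``$M$ is locally congruent to that example.'' This inference requires a Bonnet-type uniqueness theorem for submanifolds of the nearly K\"ahler $\CP^3$, and the paper is explicit that no such theorem is available: it is stated only as a Conjecture, and the text stresses that existence/uniqueness has so far only been established by ad hoc arguments case by case. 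You invoke exactly this unproven Conjecture (or an unspecified ``ad hoc argument'') at the decisive moment, so as written the proof does not close. Note also that even the weaker matching step is delicate: your ``two-parameter family'' guess undercounts the parameters of the actual families (Family 4 alone carries three parameters subject to one relation), so the dictionary between algebraic solutions and orbits is not as clean as the proposal suggests.

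The paper avoids this issue entirely by never passing through the fundamental equations. Its proof of \Cref{thm:EH} (split into \Cref{thm:EH_only_S2_left,thm:characterisation_EH_not_S2,thm:EH_not_S2}) works directly with the group action: $M=K\cdot p$ for a subgroup $K\subset\Sp(2)$, so the pair $(\mathfrak{k}_0,\mathfrak{k})$ must appear in the list of subalgebras of $\sp(2)$ (\Cref{fig:sub_algebras_sp2}, \Cref{table:possible_lie_algebras_EH_surfaces}); the tangent vectors $U,V$ are lifted to elements $X,Y\in\sp(2)$, the totally real condition and the bracket-closure conditions for each candidate subalgebra become explicit algebraic equations in the lift coefficients, and the admissible solutions are then exponentiated, $ (t,s)\mapsto\hpi(e^{tX}e^{sY}p_0)$, to produce the families. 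Congruence is automatic in this framework, because conjugating the subalgebra by an element of $\Sp(2)$ is an ambient isometry, so no rigidity theorem is ever needed. If you want to salvage your moving-frame route, you would have to supply, for each constant solution of your algebraic system, a concrete integration-plus-uniqueness argument of the kind the paper gives in the proof of \Cref{thm:classification_AC_KCP3}; that is substantial extra work and is precisely what the Lie-algebraic method buys you for free.
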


This result is the combination of \Cref{thm:EH_not_S2,thm:EH_only_S2_left} that we will prove below.

\begin{figure}[bht]
    \centering
    \begin{tikzcd}[column sep=1.5em]
                                           &                                      &                                          & \sp(2)                                               &                                    &                            \\
                                           & \su(2)\oplus\su(2) \arrow[rru, hook, orange, dotted] &                                          & \su(2) \oplus \u(1) \arrow[u, hook, blue,dashed] \arrow[ll, hook, orange, dotted] &                                    &\su(2) \arrow[llu, hook] \\
        \u(1)\oplus \u(1) \arrow[ru, hook] & \u(2) \arrow[u, hook]                & \su(2) \arrow[ru, hook, blue,dashed] \arrow[lu, hook] &                                                      & \u(1)\oplus \u(1) \arrow[lu, hook] & \u(1) \arrow[u, hook]      \\
        \u(1) \arrow[u, hook]              & \su(2) \arrow[u, hook]               & \u(1)\oplus\u(1) \arrow[lu, hook]        & \u(1) \arrow[lu, hook]                               & \u(1) \arrow[u, hook]              &                            \\
                                           & \u(1) \arrow[u, hook]                & \u(1) \arrow[u, hook]                    &                                                      &                                    &                           
        \end{tikzcd}
\caption{Up to $\Ad_g: \sp(2) \to \sp(2): X \mapsto g X g^{-1}$ for some $g \in \Sp(2)$, this diagram gives all Lie sub algebras of $\sp(2)$. Modified from \cite{kerr2013}, where we used $\so(5) = \sp(2)$. Note that the chain $\SU(2) \U(1) \to \SU(2)\times\SU(2) \to \Sp(2)$ (whose corresponding Lie algebra sequence is indicated with orange dotted arrows) gives rise to the twistor fibration $\ttau: \CP^3 \to \sphere{4}$. Moreover, the chain $\SU(2) \to \SU(2)\times\U(1) \to \Sp(2)$ (corresponding sequence is indicated with blue dashed arrows) gives rise to the Hopf fibration $\hpi: \sphere{7} \to \CP^3$.  }
    \label{fig:sub_algebras_sp2}
\end{figure}

\begin{proposition}\label{thm:EH_only_S2_left}
    Suppose $M$ is a totally real surface of nearly Kähler $\CP^3$ that is extrinsically homogeneous and such that $T M \cap \Dtwo$ and $T M \cap \Dfour$ are zero-dimensional. Then, $M$ is almost complex for Kähler $\CP^3$ and thus locally congruent to \Cref{eq:ex_sphere_in_Lagrangian}. 
\end{proposition}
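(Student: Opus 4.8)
The plan is to use the extrinsic homogeneity to reduce the classification to a Lie-theoretic problem and then exploit the structural constraint that $TM \cap \Dtwo$ and $TM \cap \Dfour$ are both zero-dimensional. Since $M$ is extrinsically homogeneous, it is an orbit of a connected subgroup $H \subseteq \Sp(2)$ acting on the nearly Kähler $\CP^3$, and the isotropy-augmented frame introduced in \Cref{sec:frame} must be invariant (up to the allowed frame changes) under the $H$-action. In particular, the structure functions $b_1, b_2, b_4, b_5, c_3, c_4, c_5$ of \Cref{eq:J0def_general}, together with the connection and second-fundamental-form coefficients $d_i, k_i, m_i, k, m$, are all \emph{constant}. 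First I would fix the type of $M$ using \Cref{thm:frame_TR_in_NKCP3}: the hypothesis that $TM$ meets neither $\Dtwo$ nor $\Dfour$ translates, via the decomposition $TM \oplus JTM$ relative to $P$, into a strong restriction on how $\Jo$ (equivalently $P = -J\Jo$) sits relative to $TM$, and I expect this to force $M$ to be of Type 2.

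The next step is to analyse the Type 2 case with all coefficients constant. With $b_1 = \cos\alpha$, $b_2 = \sin\alpha$, $c_3 = -\sin\alpha$ and $b_4 = b_5 = c_4 = c_5 = 0$, constancy of the frame gives $\alpha$ constant. I would then feed this into the compatibility equation $(\nabla \Jo)(X,Y) = \tfrac{J+\Jo}{2} G\bigl(\tfrac{J-\Jo}{2}X, Y\bigr)$ from \Cref{eq:derivatives_P_J0}, differentiating the relations of \Cref{eq:J0def_general} along $U$ and $V$ using the explicit connection formulas $\nabla_U e_i, \nabla_V e_i$ recorded in \Cref{sec:frame}. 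This yields algebraic relations among $\alpha$, the $d_i$, and the components of $h$; the Gauss, Codazzi and Ricci equations for the nearly Kähler curvature tensor $R$ then pin down the remaining constants. The goal of this computation is to show $\sin\alpha = 0$, i.e. that $M$ is almost complex for Kähler $\CP^3$ by \Cref{thm:Kahler_TR_AC_type}. Once almost-complexity is established, \Cref{thm:classification_AC_KCP3} immediately gives local congruence to \Cref{eq:ex_sphere_in_Lagrangian}, closing the argument.

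An alternative, more conceptual route—which may be how the authors proceed, given \Cref{fig:sub_algebras_sp2}—is to classify directly the two-dimensional orbits: the condition that $TM$ is disjoint from both $\Dtwo$ and $\Dfour$ singles out which subalgebras $\h \subseteq \sp(2)$ of the diagram can produce such an orbit. One would run through the candidate subalgebras $\u(1)\oplus\u(1)$, $\su(2)$, $\u(1)$, etc., compute the corresponding tangent plane $\m_{\h} \subseteq \m$ in terms of the basis $m_1,\dots,m_6$, and check the intersection condition $\m_\h \cap \Span\{m_1,m_2\} = \m_\h \cap \Span\{m_3,m_4,m_5,m_6\} = 0$ together with total reality ($g(X,JY)=0$ on $\m_\h$). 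The disjointness from $\Dtwo = \Span\{m_1,m_2\}$ and $\Dfour$ is quite rigid and should eliminate all the flat-torus ($\U(1)\oplus\U(1)$) orbits, leaving only an $\SU(2)$-orbit, which is the sphere of \Cref{eq:ex_sphere_in_Lagrangian}.

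The main obstacle I anticipate is the first reduction step, namely showing cleanly that the zero-dimensional-intersection hypothesis forces Type 2 rather than Type 0: Types 0 and 2 are closely related (Type 2 is the degenerate case $\alpha = -\pi/2$ of Type 0, per the remark after \Cref{thm:frame_TR_in_NKCP3}), and one must argue that a generic Type 0 surface has $TM$ meeting $\Dtwo$ or $\Dfour$ nontrivially precisely when the four-dimensional minimal invariant subspace condition fails. Translating "$TM \cap \Dtwo = TM \cap \Dfour = 0$" into the language of the structure functions $b_i, c_i$, and reconciling it with the characterisation in the remark that Type 2 is exactly when $\Jo TM \subseteq TM \oplus JTM$, is the delicate point; the subsequent curvature computation, while lengthy, is expected to be routine once the type and constancy are in hand.
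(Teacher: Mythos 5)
Your primary route (constancy of the frame functions plus a reduction to Type 2) contains a genuine gap at exactly the step you flag as ``the main obstacle'': the hypothesis $TM \cap \Dtwo = TM \cap \Dfour = \{0\}$ does \emph{not} force $M$ to be of Type 2. Writing $P = -J\Jo$ and computing $P(xU+yV)$ in the frame \eqref{eq:frame_TR_surface}, the conditions for $xU+yV$ to be a $(\pm 1)$-eigenvector of $P$ include $x b_1 = y b_1 = 0$ (the $JU$- and $JV$-components). Hence $b_1 \neq 0$ already guarantees that both intersections are zero. Since $b_1 = \lambda(1-\sin\alpha)/(1+\lambda^2)$ for Type 0 and $b_1 = \tfrac12$ for Type 1, every Type 0 surface with $\lambda \neq 0$ and every Type 1 surface satisfies your hypothesis; the hypothesis excludes only Type 0 with $\lambda = 0$ and Type 2 with $\cos\alpha = 0$ (precisely the surfaces that are totally real for Kähler $\CP^3$, by \Cref{thm:Kahler_TR_AC_type}). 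So the substantive content of the proposition is to rule out extrinsically homogeneous surfaces of Type 0 with $\lambda \neq 0$, of Type 1, and of Type 2 with $\sin\alpha \neq 0$; your first route never engages with this, and the ``routine'' curvature computation you defer to would have to be carried out in all of these cases, not just for Type 2, with no indication of how it closes.

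Your alternative sketch is indeed the paper's proof, but in the paper it is not a soft rigidity argument: it is a concrete computation in $\sp(2)$. The authors use the hypothesis and the isotropy to normalise the tangent plane at a point to $U_p = m_1 + \lambda m_3$, $V_p = m_2 + \tfrac1\lambda m_5 + \mu m_4 + \nu m_6$ with $\lambda \neq 0$, observe that almost-complexity for Kähler $\CP^3$ is equivalent to $\lambda^2 = 1$, $\mu = \nu = 0$, and lift $U,V$ to elements $X, Y \in \sp(2)$ with unknown isotropy components. The subalgebra diagram of \Cref{fig:sub_algebras_sp2} together with $\dim\mathfrak{k} - \dim\mathfrak{k}_0 = 2$ (\Cref{table:possible_lie_algebras_EH_surfaces}) reduces everything to three essential cases: $[X,Y]=0$; the set $\{X,Y,[X,Y]\}$ spans an $\su(2)$ with $[X,Y] \in \h$; or $X,Y$ together with the full isotropy generate $\su(2)\oplus\su(2)$. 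The first and third are shown by direct bracket computations to be incompatible with the normalised tangent plane (this is how the flat tori are eliminated, as you guessed), while in the second the $\su(2)$ structure equations force $\lambda = \pm 1$, $\mu = \nu = 0$, after which \Cref{thm:classification_AC_KCP3} finishes the argument. So your second route points at the correct strategy, but the case analysis over the pairs $(\mathfrak{k}_0,\mathfrak{k})$ and the bracket computations---which constitute essentially the whole proof---are missing from the proposal.
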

\begin{proof}
    Suppose $K \subset \Sp(2)$ is a subgroup such that $M = K \cdot p$, and let $K_0 = H \cap K$, with $H = \SU(2)\times U(1)$ the isotropy. Let $\mathfrak{k}, \mathfrak{k}_0$ be the lie algebras of $K$ and $K_0$, respectively. By \Cref{fig:sub_algebras_sp2}, we know that all options for $\mathfrak{k}_0$ are given by the second column in \Cref{table:possible_lie_algebras_EH_surfaces}. Since we are looking at surfaces in $\CP^3$, we need $\dim(\mathfrak{k}) - \dim(\mathfrak{k}_0) = 2$. Hence, we get the possibilities for $\mathfrak{k}$ listed in the fourth column of \Cref{table:possible_lie_algebras_EH_surfaces}. 

    In the remainder of the proof, we will show that only the case $\mathfrak{k}_0 = \u(1)$ and $\mathfrak{k} = \su(2)$ leads to a totally real example, and that they give rise to the example of \Cref{eq:ex_sphere_in_Lagrangian}. To do so, we will explicitly compute in $\sp(2)$ that only the above case is possible, by using the assumption that there is no non-trivial overlap between $TM$ and any one of the distributions. 

    We thus assume that 
    $U_p = m_1 + \lambda m_3$ and $V_p = a m_1 + b m_2 + \sum_{i=3}^6 \mu_i m_i$, where we also used the isotropy at $p$. We also know that $\lambda \neq 0$, $a^2+b^2 >0$ and $\mu_1^2 + \mu_2^2 + \mu_3^2 + \mu_4^2 >0$. By subtracting $a U$ from $V$, we may assume $a=0$. Assuming $U$ and $V$ are orthogonal, we find $\mu_3 = 0$. By renormalising $V$, we find $V_p = m_2 + \sum_{i=4}^6 \mu_i m_i \qquad \lambda, \mu_4^2 + \mu_5^2 + \mu_6^2 \neq 0$.

    Now, we force $M$ to be totally real, i.e. $g(J U_p, V_p) = 0$, to obtain $1 - \lambda \mu_5 = 0$. Note that then $\mu_5$ can no longer be zero. Hence, after renaming, we get $V_p = m_2 + \frac{1}{\lambda} m_5 + \mu e_4 + \nu e_6$ and $\lambda \neq 0$.
    Notice that $M$ is almost complex for Kähler $\CP^3$ if and only if $\lambda^2=1, \mu=0, \nu =0$. We lift these vectors to $\sp(2)$ to find that we need to consider the matrices of the form 
    \begin{equation}\label{eq:proof_EH_XY_lifted}
        X = m_1 + \lambda m_3 + \sum_{i=1}^4 a_i h_i 
    \qquad 
    Y = m_2 + \frac{1}{\lambda} m_5 + \mu e_4 + \nu e_6 + \sum_{i=1}^4 b_i h_i \qquad \lambda \neq 0.
    \end{equation}

    From the argument above and \Cref{table:possible_lie_algebras_EH_surfaces}, there are five cases to consider:
    \begin{enumerate}
        \item $X$ and $Y$ are commutative;
        \item $\{X,Y, [X,Y]\}$ span a $\SU(2)$ and $[X,Y] \in \mathfrak{h}$;
        \item There exist commuting $Z,W \in \mathfrak{h}$ such that $\{X,Y, Z, W\}$ generate a $\su(2) \oplus \u(1)$;
        \item There exist commuting $Z,W \in \mathfrak{h}$ such that $\{X,Y, Z, W\}$ generate a $\u(2)$;
        \item There exist $Z_1, Z_2, Z_3, W \in \mathfrak{h}$ such that $\{Z_1,Z_2,Z_3, W\}$ generate $\su(2)\oplus\u(1)$ and $\{X,Y, Z_1,Z_2, Z_3, W\}$ generate $\su(2) \oplus \su(2)$.
    \end{enumerate}
    Notice that the Lie algebras $\su(2) \oplus \u(1)$ and $\u(2)$ are isomorphic, so we can safely ignore case 4. Moreover, we can also ignore case 3, as the non-existence of other examples is implied once we have shown that in case 2. We now show that cases 1 and 5 give no totally real examples, and that case 2 only gives the example of \Cref{eq:ex_sphere_in_Lagrangian}. Then, we have proved the statement.

    \textit{Case 1}. Given $X,Y$ of \Cref{eq:proof_EH_XY_lifted}, we can take the commutator in $\sp(2)$ and project this to both $\h$ and $\m$. Splitting as such makes the equations easier to deal with. It is straightforward to check that these equations are never zero, for any values of $a_i$, $b_i$, $\lambda, \mu$ or $\nu$. Hence, this case cannot occur.

    \textit{Case 2}. Let $Z= [X,Y]_\h$. We demand that $X,Y,Z$ form an $\su(2)$ algebra. It is easy to check that $\kappa([Z,X], X)=\kappa([Z,X], Z) = 0$ and $\kappa([Z,Y], Y)=\kappa([Z,Y], Z) = 0$. 
    Let $\tilde{X}$ be some multiple of $X$, and similarly for $Y$ and $Z$.
    By considering $([\tilde{Z},\tilde{X}]-\tilde{Y})_\m$ and $([\tilde{Y},\tilde{Z}]-\tilde{X})_\m$, we find by which factor to rescale $X$, $Y$ and $Z$ with to obtain $[X,Y]=Z$, $[Y,Z]=X$ and $[Z,X]=Y$, if $X$, $Y$ and $Z$ form an involutive distribution. We thus find that we need to rescale $X$ and $Y$ with a factor $12^{-1/2}$, and $Z$ with a factor $12^{-1}$. On the other hand, in general, these factors are given by $\alpha^{-1/2}$, $\beta^{-1/2}$ and $(\alpha \beta)^{-1/2}$, where 
    $\alpha = \frac{\kappa( [Z,X], Y)}{\kappa(Y,Y)}$ and $\beta = \frac{\kappa( [Y,Z], X)}{\kappa(X,X)}$. 
    We thus have the following equations to solve:
    \[ \frac{\kappa( [Z,X], Y)}{\kappa(Y,Y)} = \frac{\kappa( [Y,Z], X)}{\kappa(X,X)} = 12, \quad [X,Y]=Z, \quad [Y,Z]=X, \quad [Z,X]=Y, \]
    in the unknowns $a_i, b_i, \lambda, \mu, \nu$. 
    Solving these equations, we find that only the following two solutions exist: $(a_i) = \mqty(0&0&0&-1)$, $(b_i) = \mqty(0&0&1&0)$, $\mu=\nu=0$ and $\lambda = \pm 1$. The conditions on $\mu, \nu, \lambda$ are equivalent with $M$ being almost complex for Kähler $\CP^3$, as we already remarked before. From \Cref{thm:classification_AC_KCP3}, we are done in this case.
    
    \textit{Case 5}. Note that $\{Z_1, Z_2, Z_3, W\}$ generate the full isotropy. So, we need to check that there are no $X,Y$ that together with the full isotropy forms a $\su(2) \oplus \su(2)$, and is totally real. If we don't assume that $M$ is totally real, we would encounter the fibers of the twistor fibration here. Take $X,Y$ of \Cref{eq:proof_EH_XY_lifted} and demand that it forms an involutive subalgebra with the isotropy. From demanding that it doesn't commutes into $\mathfrak{m}$ (except for $X$ and $Y$) we get some conditions on the projection of $X$ and $Y$ to $\mathfrak{h}$. Then, we demand that $([X, h_1] - \alpha Y)_\mathfrak{m}$ vanishes for some $\alpha$, and we find that this can never be. This shows that this case can also not occur and finishes the proof.
\end{proof}

\begin{table}[t]
        \centering
        \begin{tabular}{c:c|c:c}
            $\dim(\mathfrak{k}_0)$ & $\mathfrak{k}_0$ & $\dim(\mathfrak{k})$ & $\mathfrak{k}$ \\  \hline
             0 & $\set{0}$  & 2 & $\u(1) \oplus \u(1)$ \\
             1 & $\u(1)$  & 3 & $\su(2)$ \\
             2 & $\u(1) \oplus \u(1)$  & 4 & $\u(2)$, $\su(2) \oplus \u(1)$\\
             3 & $\su(2)$  & 5 & / \\
             4 & $\su(2) \oplus \u(1)$  & 6 & $\su(2)\oplus \su(2)$
        \end{tabular}
        \caption{The possible Lie sub-algebras $\mathfrak{k} = \Lie(K)$, $\mathfrak{k}_0 = \Lie(K_0)$ of $\sp(2)$ giving rise to an extrinsically homogeneous surface $K/K_0$ in $\CP^3$. }
        \label{table:possible_lie_algebras_EH_surfaces}
    \end{table}

\begin{lemma}\label{thm:characterisation_EH_not_S2}
    Suppose $M$ is an extrinsically homogeneous totally real surface of nearly Kähler $\CP^3$ that is not almost complex for Kähler $\CP^3$. Then, $M$ is totally real for Kähler $\CP^3$ and locally congruent to a flat torus. 
\end{lemma}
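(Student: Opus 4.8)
The plan is to prove the two conclusions in turn: that $M$ is totally real for $\Jo$, and that it is then flat. For the first I would argue by contraposition from \Cref{thm:EH_only_S2_left}. Since $M$ is not almost complex for $\Jo$, that proposition forbids $TM\cap\Dtwo$ and $TM\cap\Dfour$ from both being zero-dimensional, so at least one of them is a nonzero subspace of $TM$. To exploit this, recall that $P=-J\Jo$ acts on $TM$ with tangential part $\diag(b_2,c_3)$ in the frame $(U,V)$ of \Cref{thm:frame_TR_in_NKCP3}, while $\Dtwo$ and $\Dfour$ are the $(-1)$- and $(+1)$-eigenspaces of $P$ on $T\CP^3$. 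Thus a nonzero tangent vector can lie in $\Dtwo$ (resp. $\Dfour$) only if it is $U$ or $V$, the corresponding eigenvalue $b_2$ or $c_3$ equals $-1$ (resp. $+1$), and the normal part of $PU$ or $PV$ vanishes.

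Next I would run through the three types. In Type 1 one has $b_2=c_3=\tfrac12$, and in Type 0 the eigenvalues satisfy $b_2\in\,]-1,1[$ while $c_3=1$ occurs precisely when $\lambda=0$ (and $-1$ is never reached); in Type 2 the eigenvalues are $\pm\sin\alpha$, hitting $\pm1$ exactly when $\cos\alpha=0$. Checking the normal parts — which vanish for Type 2 with $\cos\alpha=0$ because then $b_1=b_4=b_5=c_4=c_5=0$, and for Type 0 with $\lambda=0$ because then $PV=V$ genuinely — shows that a nontrivial intersection forces either Type 0 with $\lambda=0$ or Type 2 with $\cos\alpha=0$. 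By \Cref{thm:Kahler_TR_AC_type} these are exactly the surfaces that are totally real for $\Jo$, which settles the first claim. (En route, this also shows that Type 1, Type 0 with $\lambda\neq0$, and Type 2 with $\sin\alpha\cos\alpha\neq0$ cannot be extrinsically homogeneous, since each would have both intersections trivial yet fail to be almost complex.)

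For flatness, the key structural remark is that in both surviving cases $TM\cap\Dfour$ is a rank-one distribution: it is spanned by $V$ (Type 0, $\lambda=0$) or by $U$ (Type 2, $\cos\alpha=0$), and it cannot equal all of $TM$ since the two eigenvalues are never simultaneously $+1$. This intersection is defined intrinsically and globally, hence it is a genuine (unoriented) line field on $M$. Being extrinsically homogeneous, $M$ is a complete, intrinsically homogeneous surface of constant Gaussian curvature $c$, and as an orbit of a closed subgroup of the compact group $\Sp(2)$ it is compact. The existence of a line field forces $\chi(M)=0$, so $M$ is a torus or a Klein bottle; Gauss--Bonnet then gives $c\,\mathrm{Area}(M)=2\pi\chi(M)=0$, whence $c=0$. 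Therefore $M$ is flat and locally congruent to a flat torus.

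I expect the flatness step to be the main obstacle, for two reasons. First, one must know that the surviving surface is of a single type everywhere — which is exactly where extrinsic homogeneity is used — so that $TM\cap\Dfour$ is a bona fide global line field rather than something that degenerates on a closed set. Second, the clean Gauss--Bonnet conclusion relies on compactness of the orbit, so one should confirm that the relevant subgroup may be taken closed. A purely computational alternative would be to extract $c$ directly from the Gauss equation, using the explicit curvature $R$ of the excerpt together with the admissible second fundamental form and \Cref{eq:derivatives_P_J0}; I would expect this to reproduce $c=0$, but at a considerably higher computational cost than the topological argument.
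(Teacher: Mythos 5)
Your first step --- deducing that $M$ is totally real for $\Jo$ --- is correct, and it takes a genuinely different route from the paper's: you read off the possible eigenvalues $b_2,c_3$ of the tangential part of $P=-J\Jo$ type by type and check when the normal parts of $PU,PV$ vanish, concluding that a nontrivial intersection with $\Dtwo$ or $\Dfour$ forces Type 0 with $\lambda=0$ or Type 2 with $\cos\alpha=0$. The paper instead works in $\m$, uses the isotropy to normalise $U_p,V_p$, and observes that the resulting totally real plane is automatically $\Jo$-totally real. Both are pointwise linear-algebra arguments and both are sound.

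The flatness step, however, has a genuine gap, and it is exactly the one you flagged but did not resolve: compactness of the orbit. An extrinsically homogeneous surface is an orbit $K\cdot p$ of a subgroup $K\subset\Sp(2)$, but $K$ need not be closed, and it cannot in general be replaced by a closed subgroup: the orbit may be a dense immersed winding inside a higher-dimensional orbit of $\overline{K}$ (the two-dimensional analogue of an irrational line on the Clifford torus in $\sphere{3}$). Such an orbit is complete --- homogeneous Riemannian manifolds always are --- but it is not compact, so the Euler-characteristic and Gauss--Bonnet argument simply does not apply to it. To rescue your topological approach you would need a case split on the constant curvature $c$: if $c>0$, completeness forces compactness by Bonnet--Myers and your line-field argument rules out $\sphere{2}$ and $\reals P^2$; if $c<0$, the universal cover is the hyperbolic plane, and one must exclude this Lie-theoretically, e.g.\ every subalgebra of the compact algebra $\sp(2)$ is of compact type, while every Lie algebra acting transitively on the hyperbolic plane (namely $\mathfrak{sl}(2,\reals)$ or its two-dimensional solvable subalgebra) is not. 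Neither argument appears in your proposal. The paper avoids all of this by staying Lie-algebraic: it lifts $U,V$ to $X,Y\in\sp(2)$ as in the proof of \Cref{thm:EH_only_S2_left}, runs through the three admissible algebra patterns, kills the $\su(2)$ and $\su(2)\oplus\su(2)$ cases by explicit bracket computations (for instance $g_q([Z,X]-Y,m_1)=0$ forces $a=0$, whence $Y_\m=0$), and is left with commuting $X,Y$, so the orbit is locally a flat torus. Note also that this structural output --- commuting generators --- is precisely what the proof of \Cref{thm:EH_not_S2} consumes afterwards; your argument, even patched, would deliver only intrinsic flatness, not that input.
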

\begin{proof}
    By \Cref{thm:EH_only_S2_left}, we can assume that there is at least a one dimensional overlap between $TM$ and $\Dfour$ or $\Dtwo$. For an overlap with $\Dfour$, we can use the isotropy to assume at a point $p$ that $U_p= m_3$ and $V_p = a m_1 + \sum_{i=3}^6 \mu_i m_i$. Just as in the proof of \Cref{thm:EH_only_S2_left}, we can assume that $\mu_3 = 0$. The assumption that $M$ is totally real leads to the conclusion that $\mu_5 = 0$. Hence, after renaming, we have $V_p = a m_1 + \mu m_4 + \sqrt{2} \nu m_6$. The factor $\sqrt{2}$ is merely for future convenience. Notice that $M$ is now automatically also totally real for Kähler $\CP^3$. One can check with the exact same reasoning as before, that starting from $U_p = m_1$, one gets that $V_p =  m_3$. Hence, we can restrict to the case that there is a one dimensional overlap between $TM$ and $\Dfour$, and have the other as a special case therein. 

    Just as in the proof of \Cref{thm:EH_only_S2_left}, we can lift $U$ and $V$ to $X$ and $Y$ in $\Sp(2)$ which at $q$ (with $q \mapsto p$ under the map $\Sp(2) \to \CP^3 = \frac{\Sp(2)}{\Sp(1)\U(1)}$) have to be given by 
    \[ X_q = m_3 + \sum_{i=0}^3 a_i h_i \text{ and } Y_q = a m_1 + \mu m_4 + \nu m_6 + \sum_{i=0}^3 b_i h_i. \]
    Furthermore, we only have to consider the following cases as is argued in the proof of \Cref{thm:EH_only_S2_left}:
    \begin{enumerate}
        \item $X$ and $Y$ are commutative;
        \item $\{X,Y, [X,Y]\}$ span a $\SU(2)$ and $[X,Y] \in \mathfrak{h}$;
        \item There exist $Z_1, Z_2, Z_3, W \in \mathfrak{h}$ such that $\{Z_1,Z_2,Z_3, W\}$ generate $\su(2)\oplus\u(1)$ and $\{X,Y, Z_1,Z_2, Z_3, W\}$ generate $\su(2) \oplus \su(2)$.
    \end{enumerate}
    Notice how in the first case we obtain commuting orthonormal vector fields (after rescaling with constants) and so this example has to locally be a flat torus. 
    We now prove that there are no examples in the other two cases, and thus finish the proof.

    We assume that we are in the second case. By rescaling $X,Y, Z= [X,Y]_\mathfrak{h}$, we can assume that $[X,Y]=Z$, $[Y,Z]=X$ and $[Z,X]=Y$. Then, $g_q([Z,X]-Y, m_1) = 0$ implies $a=0$. Moreover, from $g_q([X,Y], m_1)= 0$ and $g_q([X,Y], m_2)= 0$, we find $\mu=\nu=0$. However, then $Y_\mathfrak{m}$ is zero, which is not allowed to happen. Hence, this case cannot occur. 

    Assuming we are in case 3, we conclude just as before that $\set{X,Y, h_0, h_1, h_2, h_3}$ needs to be involutive. In particular, we then need that $g_q([X, h_0] - x X -y Y, m_5) = 0$ for some $x,y$. However, $g_q([X, h_0] - x X -y Y, m_5) = -\sqrt{2}$, independent of $x,y$. Hence, also this case cannot occur. 
\end{proof}

\begin{proposition}\label{thm:EH_not_S2}
    Suppose $M$ is an extrinsically homogeneous totally real surface of nearly Kähler $\CP^3$. Then it is locally congruent to either a Kähler totally real torus of \Cref{eq:EH_tori_fam1,eq:EH_tori_fam2a,eq:EH_tori_fam2b,eq:EH_tori_fam3}, or to the Kähler almost complex sphere of \Cref{eq:ex_sphere_in_Lagrangian}. In particular, if it is minimal, it is locally congruent to \Cref{eq:EH_tori_fam1} with $h_1=\nu = 0$ or to \Cref{eq:EH_tori_fam2a} with $\nu = \pm \frac{1}{2 \sqrt{2}} \sqrt{1 + \sqrt{17}}$. Moreover, if it is minimal for Kähler $\CP^3$, it is either locally congruent to \Cref{eq:EH_tori_fam1} with $h_1=\nu = 0$, or to \Cref{eq:unique_TR_flat_minimal_KCP2}. 
\end{proposition}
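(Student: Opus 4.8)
The plan is to assemble the proposition from the two results that precede it, namely \Cref{thm:EH_only_S2_left} and \Cref{thm:characterisation_EH_not_S2}, and then to pin down the minimality statements using the mean-curvature computations carried out for each of the four torus families in \Cref{sec:examples}. First I would dispose of the dichotomy coming from the almost-product structure $P$: either $TM$ has zero intersection with both $\Dtwo$ and $\Dfour$, or it does not. In the former case \Cref{thm:EH_only_S2_left} applies directly and tells us $M$ is almost complex for Kähler $\CP^3$, hence (by \Cref{thm:classification_AC_KCP3}) locally congruent to \eqref{eq:ex_sphere_in_Lagrangian}. In the latter case, \Cref{thm:characterisation_EH_not_S2} tells us $M$ is totally real for Kähler $\CP^3$ and locally a flat torus. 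So the first half of the statement reduces to showing that every such extrinsically homogeneous flat Kähler-totally-real torus is in fact congruent to one of the explicit families \eqref{eq:EH_tori_fam1}, \eqref{eq:EH_tori_fam2a}, \eqref{eq:EH_tori_fam2b} or \eqref{eq:EH_tori_fam3}.

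The key step is therefore to classify the commutative (``Case~1'') subalgebras that arise in the proof of \Cref{thm:characterisation_EH_not_S2}, since those are exactly the ones yielding a flat torus. Concretely, from the lift $X_q = m_3 + \sum a_i h_i$ and $Y_q = a\, m_1 + \mu\, m_4 + \nu\, m_6 + \sum b_i h_i$ (together with the complementary $\Dtwo$-overlap case $X_q = m_1$, $Y_q = m_3$ treated there as a degeneration), I would impose $[X,Y]=0$ and solve the resulting polynomial system in $a_i, b_i, a, \mu, \nu$ up to the residual isotropy $\U(1)\oplus\U(1)$ acting by $\Ad$. The orbits of the surviving one-parameter-subgroup pairs $e^{tX}e^{sY}\cdot p_0$ should then be matched, after reparametrisation, to the four written-down families; the parameters $\mu,\nu$ (and in Family~4 the constrained parameter determined by $8\mu\sigma + 8\nu^2\rho + 2\sqrt2\,\nu(8\rho^2+8\sigma^2-1) - 8\rho = 0$) account for the moduli. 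I expect this orbit-matching to be the main obstacle: while each family was verified to be totally real of the claimed Type~0 in \Cref{sec:examples}, establishing that the list is \emph{exhaustive} requires carefully organising the commuting-pair solutions modulo the $\Ad$-action, and checking that no genuinely new $\U(1)\oplus\U(1)$-orbit escapes the four cases.

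For the minimality assertions I would invoke the explicit mean-curvature vectors tabulated family by family. In Family~1 the computation gave $H=0$ precisely at $h_1=\nu=0$, where the surface degenerates to the Clifford torus in $\reals P^3$, \eqref{eq:EH_tori_fam1} with $h_1=\nu=0$; Families~3 and~4 were shown to contain no minimal members (in Family~3 because $\mu\neq 0$, in Family~4 directly from the mean-curvature formula), and Family~2 is minimal exactly at $\nu = \pm\tfrac{1}{2\sqrt2}\sqrt{1+\sqrt{17}}$. Since \eqref{eq:ex_sphere_in_Lagrangian} is non-minimal, collecting these gives the stated minimal list. For minimality in the Kähler sense I would instead read off the vanishing of $H_\circ$: in Family~1 this again forces $h_1=\nu=0$ (the Clifford torus), while Family~2 is $H_\circ$-minimal at $2\nu^2=1$, which \eqref{eq:unique_TR_flat_minimal_KCP2} identifies as the flat minimal totally real torus in a totally geodesic $\CP^2\subset\CP^3$; Families~3 and~4 contribute nothing, and the sphere is Kähler totally geodesic, hence minimal but already accounted for as almost complex. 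Assembling these observations yields the full statement.
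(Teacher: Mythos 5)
Your proposal follows essentially the same route as the paper: it assembles the statement from \Cref{thm:EH_only_S2_left} and \Cref{thm:characterisation_EH_not_S2}, reduces the torus case to solving $[X,Y]=0$ for the lifted vectors $X_q = m_3 + \sum_i a_i h_i$ and $Y_q = a\, m_1 + \mu\, m_4 + \nu\, m_6 + \sum_i b_i h_i$, exponentiates $tX+sY$ through $p_0$ to match the four families, and reads off the minimality claims from the tabulated mean curvature vectors. The only difference is that the paper actually carries out the commutator computation you defer, finding $\mu=0$, $a=1$ after rescaling, and exactly three solution branches (yielding Family 1; Families 2 and 3; and Family 4, respectively), which settles the exhaustiveness question you correctly flag as the main remaining obstacle.
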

\begin{proof}
    We start from the reasoning (and notation) of the proof of \Cref{thm:characterisation_EH_not_S2} to obtain $X_q = m_3 + \sum_{i=0}^3 a_i h_i$ and $Y_q = a m_1 + \mu m_4 + \nu m_6 + \sum_{i=0}^3 b_i h_i$ and that they have to commute with each other. From the fact that they commute, we find from $g([X,Y], m_1)$ and $g([X,Y], m_2)$ that $\mu=0$ and $\nu - \sqrt{2} a \, a_1=0$. Note that if $a=0$, that then $(Y_q)_\mathfrak{m}=0$, which is not possible. So, we can assume that $a \neq 0$ and by rescaling that $a=1$. Note that then $a_1 = \nu /\sqrt{2}$. By further solving the equations coming from $[X,Y]=0$, we find $a_3 = b_3 =0, b_4 = 1 - \nu^2 + \sqrt{2} \nu a_2$ and $b_2 = - \sqrt{2} \nu a_4-b_1$. Furthermore, from the remaining equation coming from commutativity of $X$ and $Y$, we can consider three cases. In the first case, we have $a_4 = 0, 2 a_2 = \nu$; for the second case, we have $a_4 = 0, a_2 = -\frac{1}{2\nu}$ and $\nu \neq 0$; and finally, for the third case, $b_1 = \frac{-4 a_4^2 \nu +2 a_2 \nu  (\nu -2 a_2)-2
   a_2 +\nu }{2 \sqrt{2} a_4}$ and $a_4 \neq 0$. Note that all cases lead to the same result on $\mathfrak{m}$. We then exponentiate the matrix $t X + s Y$ and let it act on $p_0 = (1,0,0,0)$. Reparametrising, we get the examples of \Cref{eq:EH_tori_fam1}; \Cref{eq:EH_tori_fam2a} and \Cref{eq:EH_tori_fam2b}; and \Cref{eq:EH_tori_fam3} for cases 1,2 and 3, respectively. We already noted exactly when these examples are minimal. 
\end{proof}

This result together with \Cref{thm:EH_not_S2} proves \Cref{thm:EH}.

%%%%%%%%%%%%%%%%%
\section{Codazzi totally real surfaces}
%%%%%%%%%%%%%%%%%

In this section, we classify totally real surfaces that are Codazzi, i.e. $\nabla h$ is totally symmetric, where $h$ is the second fundamental form. This class of submanifolds includes the parallel ones ($\nabla h = 0$) and totally geodesic submanifolds ($h=0$). We start with the following lemma.

\begin{lemma}\label{thm:CL_is_flat_Type2}
    Let $M$ be a totally real surface of nearly Kähler $\CP^3$. Then $M$ is of Type 2 and totally real for Kähler $\CP^3$ if and only if it is Codazzi. Moreover, in either case, it is a product and flat and never totally geodesic.
\end{lemma}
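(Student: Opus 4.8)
The plan is to recast the Codazzi condition in terms of the ambient curvature and then run the computation type by type. By the Codazzi (Mainardi) equation for a submanifold, $(\nabla_X h)(Y,Z)-(\nabla_Y h)(X,Z)=(R(X,Y)Z)^{\perp}$, so $\nabla h$ is totally symmetric if and only if the normal component $(R(X,Y)Z)^{\perp}$ vanishes for all tangent $X,Y,Z$; since $TM=\Span\{U,V\}$, this reduces to $(R(U,V)U)^{\perp}=0$ and $(R(U,V)V)^{\perp}=0$. I would therefore substitute the frame \eqref{eq:frame_TR_surface} and the expansion \eqref{eq:J0def_general} of $\Jo$ into the curvature formula of \Cref{sec:description}, using $P=-J\Jo$ to express $PU,PV$ in the frame, and extract the normal parts. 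A direct (if lengthy) computation should give
\[
(R(U,V)U)^{\perp} = \tfrac12 b_1(1-5b_2)\,e_3 + \left(-\tfrac32 b_1 b_4 + \tfrac12(1-2b_2)c_5\right)e_5 + \left(-\tfrac32 b_1 b_5 - \tfrac12(1-2b_2)c_4\right)e_6,
\]
together with the analogous expression for $(R(U,V)V)^{\perp}$ obtained by running the same computation with $Z=V$, whose leading term is $\tfrac12 b_1(1-5c_3)\,e_4$. The Codazzi condition is exactly the simultaneous vanishing of all these coefficients.

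With these formulas in hand I would dispatch each type from \Cref{thm:frame_TR_in_NKCP3}. For Type~1 the $e_3$-coefficient is $\tfrac12\cdot\tfrac12\,(1-\tfrac52)=-\tfrac38\neq0$, so Type~1 is never Codazzi. For Type~0 I would split on whether $b_1$ vanishes: since $\alpha\in\,]-\tfrac\pi2,\tfrac\pi2[$ forces $\sin\alpha\neq1$, one has $b_1=0$ iff $\lambda=0$, and then the $e_5,e_6$-coefficients of $(R(U,V)V)^{\perp}$ reduce to multiples of $\cos\alpha\cos\theta$ and $\cos\alpha\sin\theta$, which (as $\cos\alpha\neq0$ in Type~0) cannot both vanish; while if $b_1\neq0$ the $e_3$- and $e_4$-coefficients force $b_2=c_3=\tfrac15$, hence $\lambda^2=1$ through the Type~0 expressions, and substituting back makes the remaining $e_5,e_6$-coefficients nonzero multiples of $\sin\theta$ and $\cos\theta$, again impossible simultaneously. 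So no Type~0 surface is Codazzi. Finally, in Type~2 the expressions collapse to $(R(U,V)U)^{\perp}=\tfrac12\cos\alpha(1-5\sin\alpha)\,e_3$ and $(R(U,V)V)^{\perp}=\tfrac12\cos\alpha(1+5\sin\alpha)\,e_4$, which both vanish iff $\cos\alpha=0$; by \Cref{thm:Kahler_TR_AC_type} this is precisely the condition of being totally real for Kähler $\CP^3$. This establishes the stated equivalence in both directions.

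For the ``moreover'' part I would work in Type~2 with $\cos\alpha=0$, taking $\sin\alpha=1$ (the case $\sin\alpha=-1$ being symmetric under interchanging $U$ and $V$). Then $\Jo U=JU$ and $\Jo V=-JV$, whence $PU=U$ and $PV=-V$, i.e.\ $U\in\Dfour$ and $V\in\Dtwo$, with $V$ tangent to the integrable twistor fibres. I would pin down the second fundamental form exactly as in the proof of \Cref{thm:classification_AC_KCP3}: feeding the data into \eqref{eq:derivatives_P_J0} yields algebraic relations among $k_i,m_i,k,m$, and the Codazzi relations, together with the connection equations, remove the remaining freedom and force the coefficients $d_1,d_2$ to vanish, exhibiting $M$ as a local Riemannian product. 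The tangential parts of the same curvature computation give the ambient sectional curvature $g(R(U,V)V,U)=\tfrac14$, so the Gauss equation reads $K^{M}=\tfrac14+g(h(U,U),h(V,V))-\norm{h(U,V)}^{2}$; substituting the determined $h$ makes the last two terms sum to $-\tfrac14$, so $M$ is flat — consistently with the explicit products $\hpi\circ F$, $F=f(v)\gamma(u)$, of \Cref{eq:all_CL_TR}. Flatness together with $g(R(U,V)V,U)=\tfrac14\neq0$ immediately forbids $h=0$, so $M$ is never totally geodesic.

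The main obstacle is the bookkeeping in the first step: assembling the four bracketed terms of the curvature tensor in the moving frame while keeping the $\Jo$- and $P$-images organised, since each term feeds into several of the normal directions $e_3,\dots,e_6$. Once the normal part is correctly isolated the type-by-type elimination is short; the only other delicate point is extracting enough of the second fundamental form in the ``moreover'' step to evaluate the Gauss term, which I expect to follow the same pattern as \Cref{thm:classification_AC_KCP3}.
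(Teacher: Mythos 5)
Your proposal is correct and follows essentially the same route as the paper: both reduce the Codazzi condition via the Codazzi equation to the vanishing of $(R(U,V)Z)^{\perp}$ for tangent $Z$, eliminate Types 0 and 1 (and pin down $\cos\alpha=0$ in Type 2) by computing these normal components in the frame of \Cref{sec:frame}, invoke \Cref{thm:Kahler_TR_AC_type} to translate this into being totally real for Kähler $\CP^3$, and then use \Cref{eq:derivatives_P_J0} to force $d_1=d_2=0$ and the normal form of $h$ (in particular $m=\tfrac12$), giving flatness, the local product structure, and the failure of total geodesy. Your explicit coefficients (e.g.\ $\tfrac12 b_1(1-5b_2)$ for the $e_3$-component, collapsing to $\tfrac12\cos\alpha(1\mp 5\sin\alpha)$ in Type 2) check out, and using the curvature characterisation for the forward implication as well is a harmless streamlining of the paper's appeal to the Gauss and Codazzi equations there.
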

\begin{proof}
    Consider $M$ of Type 2 and totally real for Kähler $\CP^3$. From \Cref{thm:Kahler_TR_AC_type}, we know that $\cos\alpha$ has to vanish. By switching $U$ and $V$ if necessary, we can assume that $\alpha = \pi/2$. From \Cref{eq:derivatives_P_J0}, together with the Gauss and Codazzi equations, it follows that $M$ has to be Codazzi. 

    Conversely, suppose $M$ is Codazzi. Then, the Codazzi equation demands that $g( R(U,V)X, \xi)$ vanishes for $X \in \Span\set{U,V}$ and $\xi$ any normal to $M$. For Types 0 and 1, we immediately get a contradiction with this condition. For Type 2, the equations $g(R(U,V)U, JU) = 0$ and $g(R(U,V)UV, JV) = 0$ imply that $\cos\alpha=0$. As before, we can then assume $\alpha = \pi/2$ and that $M$ is totally real for Kähler $\CP^3$, by \Cref{thm:Kahler_TR_AC_type}.
    
    Using \Cref{eq:derivatives_P_J0}, we see that $d_1, d_2, m_1, m_3, m_4, k_2, k$ all have to vanish. Moreover, $m = 1/2$. Since $d_1=d_2=0$, we find that $U$ and $V$ are orthogonal coordinates, so that $M$ has to be flat. Since $m=1/2$, it is never totally geodesic. From the De Rham decomposition theorem, it further follows that $M$ has to be a product manifold. 
\end{proof}

%\subsection{Complete classification Codazzi totally real surfaces}

\begin{theorem}\label{thm:classification_CL_TR}
    A totally real surface in nearly Kähler $\CP^3$ is Codazzi if and only if it is a member of the family of examples of \Cref{eq:all_CL_TR} (after projection with $\hpi$).
\end{theorem}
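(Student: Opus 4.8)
The plan is to prove the two implications separately. The ``if'' direction requires nothing new: every immersion of the form \Cref{eq:all_CL_TR} was already shown, during its construction in \Cref{sec:examples_CL}, to be totally real and Codazzi (there one has $h(F_u,F_u)=F_{uu}$, $h(F_u,F_v)=F_{uv}$, $h(F_v,F_v)=F_{vv}$, and commuting partial derivatives make $\nabla h$ totally symmetric). So I would devote the work to the converse.

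Assume $M$ is Codazzi. By \Cref{thm:CL_is_flat_Type2} it is automatically of Type 2, totally real for $\Jo$, flat, and a Riemannian product; after the interchange making $\alpha=\pi/2$ one has $\Jo U=JU$ and $\Jo V=-JV$, hence $PU=U$ and $PV=-V$, so that $U\in\Dfour$ and $V\in\Dtwo$, while the flat product structure ($d_1=d_2=0$) furnishes coordinates $(u,v)$ with $\partial_u=U$ and $\partial_v=V$. Since $M$ is totally real for $\Jo$, I would lift it horizontally to the Hopf bundle (\cite{reckziegel1985}), obtaining $\tilde F:I\times I\to\sphere{7}\subset\quaternions^2$ with $\langle\tilde F_u,i\tilde F\rangle=\langle\tilde F_v,i\tilde F\rangle=0$. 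The membership $V\in\Dtwo$ then gives $\tilde F_v=q\,\tilde F$ with $q=a\,j+b\,k$ a unit imaginary quaternion, while $U\in\Dfour$ gives $\tilde F_u\perp\{\tilde F,i\tilde F,j\tilde F,k\tilde F\}=\quaternions\tilde F$.

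The crux is to decouple the $u$- and $v$-dependence, that is, to show $q$ depends on $v$ alone. For this I would equate $\tilde F_{uv}=\tilde F_{vu}$: differentiating $\tilde F_v=q\tilde F$ in $u$ gives $\partial_v\tilde F_u=(\partial_u q)\tilde F+q\,\tilde F_u$, and since left multiplication by a unit quaternion preserves both $\quaternions\tilde F$ and its orthogonal complement, the term $q\tilde F_u$ remains orthogonal to $\quaternions\tilde F$. Hence the $\Span\{j\tilde F,k\tilde F\}$-component of $\partial_v\tilde F_u$ equals exactly $(\partial_u q)\tilde F$; but $\langle\partial_v\tilde F_u,j\tilde F\rangle=-\langle\tilde F_u,jq\tilde F\rangle$, and $jq\tilde F\in\Span\{\tilde F,i\tilde F\}\perp\tilde F_u$ (and likewise for $k$), so this component vanishes and $\partial_u q=0$. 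A priori I expect this commutation/orthogonality bookkeeping—keeping all fibre and O'Neill contributions of the Hopf submersion straight, and confirming that the quaternionic structure singled out by $\Dtwo$ is precisely the parallel $j$ used in \Cref{eq:all_CL_TR}—to be the main obstacle, although it may well collapse to the short algebraic argument just sketched.

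With $q=q(v)$, integrating the flow of $\partial_v\tilde F=q(v)\tilde F$ yields $\tilde F(u,v)=f(v)\,\gamma(u)$, where $\gamma(u)=\tilde F(u,0)$ and $f:I\to\sphere{3}\subset\quaternions$ solves $f'=q f$, $f(0)=1$. A direct check gives $\langle f',if\rangle=-\Re(\bar f\,qi\,f)=0$, so $f$ is a Legendre curve in $\sphere{3}$ in the sense of \Cref{sec:legendre_S3}; and horizontality together with $\tilde F_u\perp\{j\tilde F,k\tilde F\}$ gives $\langle\gamma',i\gamma\rangle=\langle\gamma',j\gamma\rangle=\langle\gamma',k\gamma\rangle=0$, so $\gamma$ is a Legendre curve in $\sphere{7}$ as in \Cref{sec:legendre_S7} (after an affine reparametrisation absorbing the metric factor $\sqrt2$ between $g$ and $\FSm$ on $\Dfour$). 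Thus $\tilde F=f\gamma$ is of the form \Cref{eq:all_CL_TR}, which, being a direct reconstruction, finishes the converse without even appealing to a uniqueness theorem.
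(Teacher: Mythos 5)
Your proposal is correct, and your converse follows a genuinely different route from the paper's. Both arguments reduce to \Cref{thm:CL_is_flat_Type2}, lift horizontally to $\sphere{7}$ via \cite{reckziegel1985}, and end at the splitting $\tilde F(u,v)=f(v)\gamma(u)$, but the mechanisms in between differ. The paper stays inside the second-fundamental-form formalism: it extracts from the Codazzi condition the ODE system \Cref{eq:CL_TR_condeqlambda} for $k_1,k_3,k_4$, writes the full second-order system \Cref{eq:CL_TR_2ndder} for the lift, constructs by hand an adapted parallel quaternionic structure \Cref{eq:CL_TR_quaternionic_structure} containing the factor $e^{iK(v)}$, proves $F(u,\cdot)\subset\quaternions(u)$ by an ODE-uniqueness argument with test fields $W$, and closes by checking that the induced curvatures $\tilde\kappa_i$ of $\gamma$ are independent of $v$ — the step where the Codazzi ODE system is actually consumed. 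You instead convert Type 2 with $\alpha=\pi/2$ into the distributional statement $PU=U$, $PV=-V$, i.e. $U\in\Dfour$ and $V\in\Dtwo$, and work throughout with the standard quaternionic structure of the ambient $\quaternions^2$. The two first-order conditions $\tilde F_v=q\tilde F$ (with $q$ a unit element of $\Span\{j,k\}$, a priori depending on both variables) and $\tilde F_u\perp\quaternions\tilde F$ then give the decoupling $\partial_u q=0$ by your mixed-partials/orthogonality computation, and integrating the linear ODE $\partial_v\tilde F=q(v)\tilde F$ produces the product form at once; the Legendre properties of $f$ and $\gamma$ are immediate checks. Your route is shorter and avoids engineering the adapted structure: since $|q|=1$ and $q\in\Span\{j,k\}$ force $q'=\kappa\, iq$, one gets $q(v)=e^{iK(v)}q(0)$ automatically, so the paper's factor $e^{iK(v)}$ is absorbed rather than constructed. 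What the paper's longer computation buys is the explicit dictionary between $k_1,k_3,k_4,m_2$ and the curvature functions $\kappa_1,\kappa_2,\kappa_3,\kappa$ of the two Legendre curves; \Cref{thm:classification_parallel_1} quotes precisely this dictionary to isolate the parallel examples, so if your proof replaced the paper's, that corollary would need its own small supplementary computation.

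Two loose ends remain, neither serious. First, membership in the family \eqref{eq:all_CL_TR} presupposes the normalisation $f(0)=1$, $f'(0)=j$; you obtain $f(0)=1$ but $f'(0)=q(0)$ an arbitrary unit vector in $\Span\{j,k\}$, so you must still rotate $q(0)$ into $j$ — for instance by the diagonal conjugation $p\mapsto e^{-i\theta}pe^{i\theta}$ on $\quaternions^2$, which is $\complexs$-linear, preserves $\liftDtwo$, rotates the plane of left multiplications $\Span\{j,k\}$, and hence descends to an isometry of the nearly Kähler $\CP^3$; equivalently, one adjusts the identification of $\complexs^4$ with $\quaternions^2$. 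Second, as you note yourself, $\gamma(u)=\tilde F(u,0)$ has constant speed $1/\sqrt{2}$ rather than $1$, because $g=2\FSm$ on $\Dfour$; the affine reparametrisation is harmless but must be performed before invoking the Legendre equation \eqref{eq:legendre_S7}, which presumes arc length.
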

\begin{proof}
    We already showed that all those examples are Codazzi. We just need to prove that they are the only ones.
    
    We start with the notations of \Cref{thm:CL_is_flat_Type2}. In particular, we are in Type 2 with $\alpha=\pi/2$. Introduce coordinates such that $\partial_u = \sqrt{2} U$ and $\partial_v = V$, and note that they are orthonormal with respect to $\FSm$.

    We now consider $\nabla h$ and impose it is totally symmetric. From $(\nabla h)(U,V,V)-(\nabla h)(V,U,V)= U(m_2) \, JV$, we find that $m_2$ can only depend on $v$. We change notation to $\kappa(v) = m_2$. 
    
    We compute that 
    $(\nabla h)(U,V,U)-(\nabla h)(V,U,U)= \l(-2 k_3 - \partial_v k_1 \r) J U+\l(2 k_1 - k_4 \kappa(v) -\partial_v k_3 \r) \cdot$ $G(U,V)+\l( k_3 \kappa(v) - \partial_v k_4 \r) J G(U,V)$.
    Hence, we get the system of PDE's
    \begin{equation}\label{eq:CL_TR_condeqlambda}
        \begin{cases}
            \partial_{v} k_1(u,v) &= -2 k_3(u,v)  \\
            \partial_{v} k_3(u,v) &= 2 k_1(u,v) - k_4(u,v) \kappa(v) \\
            \partial_{v} k_4(u,v) &= k_3(u,v) \kappa(v) 
        \end{cases}.
    \end{equation}
    Notice that this system can be treated as a system of ODE's in $v$, and letting the integration constants depend on $u$, so that existence and uniqueness of $\lambda_1, \lambda_2, \lambda_4$ is guaranteed with initial conditions. We take as initial conditions $k_1(u,0) = \frac{1}{\sqrt{2}} \kappa_1(u)$, $k_3(u,0) = - \frac{1}{\sqrt{2}} \kappa_3(u)$ and $k_4(u,0) =\frac{1}{\sqrt{2}} \kappa_2(u)$, for some functions $\kappa_1(u), \kappa_2(u), \kappa_3(u)$.

    Since $M$ is totally real for Kähler $\CP^3$ as well, we can take a horizontal lift to the round $\sphere{7} \subset \complexs^4$ of $M$. We will still work with the coordinates $u,v$ introduced above. Denote $\xi$ the unique horizontal lift of $\sqrt{2} G(U,V) = G(\partial_u, \partial_v)$. 
    We then have for the lift $F: I \times I \to \sphere{7}$ that the following equations need to be satisfied
    \begin{equation}\label{eq:CL_TR_2ndder}
        \begin{cases}
            F_{uu} = - F + \sqrt{2} k_1 i F_u + \sqrt{2} (k_4 - i k_3) i \xi \\
            F_{uv} = i \xi \\
            F_{vv} = - F - \kappa(v) i F_v
        \end{cases}.
    \end{equation}
    To finish the proof, we will study the system of equations \eqref{eq:CL_TR_2ndder}.

    Note, however, that up until now, we have lifted to the round $\sphere{7}$. In order to obtain an immersion for nearly Kähler $\CP^3$, we can still choose a quaternionic structure on $\complexs^4$ to identify it with $\quaternions^2$. Using the differential equations above, it is simple to show that the following definition gives rise to a (unique up to shift in $K$) parallel structure, and hence to a quaternionic structure on $\complexs^4$:
    \begin{equation}\label{eq:CL_TR_quaternionic_structure}
        j F = e^{i K(v)} F_v, \qquad
        j F_u = e^{i K(v)} i \xi,  \qquad
        j F_v = - e^{i K(v)} F,  \qquad
        j \xi = - i e^{i K(v)} F_u,
    \end{equation}
    where $K(v) = \int_0^v \kappa(t) \, \dd t$.

    Consider the first equation of \Cref{eq:CL_TR_2ndder} for $v=0$. As $K(0)=0$ and using \Cref{eq:CL_TR_quaternionic_structure}, it reduces to $F_{uu}(u,0) = -F(u,0) + \kappa_1(u) i F_u(u,0) + \kappa_2(u) j F_u(u,0) + \kappa_3(u) k F_u(u,0) $, with the initial conditions for $k_1, k_3, k_4$. We recognise this as an equation for a Legendre curve $\gamma: I \to \sphere{7}$. We thus have $F(u,0) = \gamma(u)$. Define $\quaternions(u)$ as 
    \[ \quaternions(u) = \Span\set{\gamma(u), i \gamma(u), j \gamma(u), k \gamma(u)}. \]
    We claim that for fixed $u$, $F(u,v)$ has to lie in $\quaternions(u)$. To show this, let $W(u)$ be any vector field with the property that along $\gamma$, it is orthogonal to $\gamma, i\gamma, j \gamma, k \gamma$, and that $\partial_v W = 0$. Since $F_v \in \Span\set{ j F, k F }$, we get the following system of equations (where the exact coefficients don't matter)
    \begin{align*}
        \partial_v \langle F, W \rangle &= \bullet \langle j F, W \rangle + \bullet \langle k F, W \rangle &
        \partial_v \langle iF, W \rangle& = \bullet \langle k F, W \rangle + \bullet \langle j F, W \rangle \\
        \partial_v \langle jF, W \rangle& = \bullet \langle F, W \rangle + \bullet \langle i F, W \rangle &
        \partial_v \langle kF, W \rangle& = \bullet \langle i F, W \rangle + \bullet \langle F, W \rangle.
    \end{align*}
    Note that for $v=0$, $\langle F(u,0), W\rangle = \langle \gamma(u), W\rangle = 0$, and similarly, $\langle i F(u,0), W\rangle = \langle j F(u,0), W\rangle = \langle k F(u,0), W\rangle = 0$. Hence, this system of differential equations in $v$ has the unique solution that for all $v$, it holds $\langle F, W\rangle = \langle i F, W\rangle = \langle j F, W\rangle = \langle k F, W\rangle = 0$, so that for fixed $u$ and all $v$, it holds that $F(u,v) \in \quaternions(u)$. 

    It then follows that the immersion has to take the form 
    \[ F(u,v) = f_1(v) \gamma(u) + f_2(v) i \gamma(u) + f_3(v) j \gamma(u)  + f_4(v) k \gamma(u).  \]
    We will write this more concisely as
    $F(u,v) = f(v) \gamma(u)$ and $f: I \to \quaternions$.
    From \Cref{eq:CL_TR_2ndder}, it follows that then
    %\[ 
    $f''(v) \gamma(u) = -f(v) \gamma(u) - \kappa(v) i f'(v) \gamma(u) \text{ or } f''(v) = -f(v) - \kappa(v) i f'(v)$. % \]
    With the identification of $\complexs^2$ and $\quaternions$, this is just the condition for a Legendre curve in $\sphere{3}$ with curvature $\kappa$, so we can assume that $f$ exists and is known. 

    We then consider the second condition of the system \eqref{eq:CL_TR_2ndder}. It is equivalent to
    %\begin{equation*}
        $f'(v) \gamma'(u) = i \xi = j e^{i K(v)} f(v) \gamma'(u)$
    %\end{equation*}
    where we used \Cref{eq:CL_TR_quaternionic_structure}. We claim that this equation is satisfied as well. Indeed, differentiate 
    \begin{equation}\label{eq:CL_TR_1stder}
        f'(v) = j e^{i K(v)} f(v).
    \end{equation}
    with respect to $v$, use that $K'(v) = \kappa(v)$, and observe that we simply get the Legendre equation for $f$ again. Hence, equations 2 and 3 of system \eqref{eq:CL_TR_2ndder} are satisfied by choosing the appropriate initial conditions for $f$. 
    
    We now consider the first equation of system \eqref{eq:CL_TR_2ndder} and claim it is also satisfied. Since $f$ lies in $\sphere{3}$, we have $\overline{f} f = 1$, so that it reduces to
    \[ \gamma''(u) = - \gamma(u) + \sqrt{2} k_1 \overline{f(v)} i f(v) \gamma'(u) + \sqrt{2} \; \overline{f(v)} ( k_4 - i k_3 ) f'(v) \gamma'(u). \]
    Using \Cref{eq:CL_TR_1stder}, this becomes
    \[ \gamma''(u) = - \gamma(u) + \sqrt{2} k_1 \overline{f(v)} i f(v) \gamma'(u) + \sqrt{2} \; \overline{f(v)} ( k_4 - i k_3 ) j e^{i K(v)} f(v) \gamma'(u). \]

    It takes a computation to expand $f$ in its components to see that 
    \[ \Re\l( \sqrt{2} k_1 \overline{f(v)} i f(v) + \sqrt{2} \; \overline{f(v)} ( k_4 - i k_3 ) j e^{i K(v)} f(v) \r) = 0 \]
    So that we can solve the following equation for $\tilde{\kappa}_1, \tilde{\kappa}_2, \tilde{\kappa}_3$:
    \[ \sqrt{2} k_1 \overline{f(v)} i f(v) + \sqrt{2} \; \overline{f(v)} ( k_4 - i k_3 ) j e^{i K(v)} f(v) = \tilde{\kappa}_1 i+\tilde{\kappa}_2 j+\tilde{\kappa}_3 k. \]
    Note that then the expression for $\gamma''$ takes the form of an equation for a Legendre curve in $\sphere{7}$. And indeed, differentiating the obtained expressions with respect to $v$, and using that $k_1, k_3, k_4$ are solutions to \Cref{eq:CL_TR_condeqlambda}, we find that $\tilde{\kappa}_1, \tilde{\kappa}_2, \tilde{\kappa}_3$ are independent of $v$. Hence, $\gamma$ has to be a solution to an equation for a Legendre curve. As $\tilde{\kappa}_1, \tilde{\kappa}_2, \tilde{\kappa}_3$ do not depend on $v$, and $\gamma$ was constructed to be a Legendre curve for $v= 0$, we find that the whole system \eqref{eq:CL_TR_2ndder} is indeed satisfied and $\tilde{\kappa}_i = \kappa_i$. In particular, this argument shows that the examples of \Cref{sec:examples_CL} are the unique Codazzi totally real surfaces of nearly Kähler $\CP^3$.
    %
    % 1) with frame: write down the equations and the conditions from the Codazzi equations, and take horizontal lift
    %
    % 2) for $v=0$, suppose $\lambda_2(u,0)=1$ $\lambda_2(u,0) = \lambda_4(u,0) = 0$ solve to find Legendre curve $\gamma$ in $\sphere{7}$
    %
    % 3) Show that for $u$ fixed, $F(u,v)$ has to lie in $\quaternions(u)$
    %
    % 4) solve $F_{vv}$ to find a Legendre curve in $\sphere{3}$
    %
    % 5) show that $i \xi = \pdv{u}\pdv{v} F$ is satisfied by the fact that $f$ is Legendre, and in doing so, find expressions for $f'$.
    %
    % 6) determine $\kappa_1, \kappa_2, \kappa_3$ so that $\gamma$ would be a Legendre curve
    %
    % 7) show that $\kappa_1, \kappa_2, \kappa_3$ are independent of $v$, finishing the argument.
\end{proof}

\begin{corollary}\label{thm:classification_parallel_1}
    Suppose $M$ is totally real for nearly Kähler $\CP^3$ and let $f: I \to \sphere{3} \subset \quaternions$ be an arc length parametrisation of a great circle, and $\gamma: I \to \sphere{3}$ a Legendre curve with constant curvature. Then $M$ is parallel, if and only if there exists an embedding $\iota : \sphere{3} \to \sphere{7} \subset \quaternions^2$ such that $M$ is of the form $\hpi \circ F$, where $F$ is given by
    $F: I \times I \mapsto \sphere{7}: f(v) (\iota\circ\gamma)(u)$.
    In particular, $M$ is minimal and parallel if and only if $\gamma$ is also a grand circle in $\sphere{3}$.
    
    Furthermore, if $M$ is parallel, it is extrinsically homogeneous. If it is minimal, it corresponds to \Cref{eq:EH_tori_fam1_minimal_parallel}, and otherwise it corresponds to one of the examples of \Cref{eq:EH_tori_fam3_parallel}.  
\end{corollary}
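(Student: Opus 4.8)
The plan is to leverage \Cref{thm:classification_CL_TR}: since $\nabla h = 0$ forces $\nabla h$ to be totally symmetric, every parallel surface is Codazzi, hence of the form $\hpi \circ F$ with $F(u,v) = f(v)\gamma(u)$ for a Legendre curve $f$ in $\sphere{3}$ of curvature $\kappa(v)$ and a Legendre curve $\gamma$ in $\sphere{7}$. I would work throughout in the adapted frame of \Cref{thm:CL_is_flat_Type2} (Type 2, $\alpha = \pi/2$, $d_1 = d_2 = 0$), in which the second fundamental form reduces to $h(U,U) = k_1 e_3 + k_3 e_5 + k_4 e_6$, $h(U,V) = \tfrac12 e_6$ and $h(V,V) = \kappa(v) e_4$, with $k_1, k_3, k_4$ governed by the system \eqref{eq:CL_TR_condeqlambda}.

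First I would reduce the parallel condition to a short list of equations. Since $\nabla h$ is already totally symmetric, $\nabla h = 0$ is equivalent to the vanishing of the four components $(\nabla h)(U,U,U)$, $(\nabla h)(U,U,V)$, $(\nabla h)(U,V,V)$ and $(\nabla h)(V,V,V)$. Using the connection formulas of \Cref{sec:frame} (in particular $\nabla_U e_4 = \tfrac12 G(U,V)$), a direct computation gives $(\nabla_U h)(V,V) = \tfrac{\kappa(v)}{2} G(U,V)$; its vanishing forces $\kappa \equiv 0$, so $f$ is a great circle. With $\kappa = 0$ the remaining components collapse: $(\nabla h)(V,V,V) = 0$ automatically since $h(V,V) = 0$, and the $e_4$-coefficient of $(\nabla h)(U,U,U)$ is $-\tfrac12 k_3$, so its vanishing yields $k_3 \equiv 0$; the Codazzi system then forces $k_1 \equiv 0$ (from $\partial_v k_3 = 2k_1$) and makes $k_4$ constant in both variables, after which $(\nabla h)(U,U,V) = \tfrac12\nabla_U^\perp e_6$ vanishes as well.

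Next I would translate this back into curvature data. Through the initial conditions $k_1(u,0) = \tfrac{1}{\sqrt2}\kappa_1(u)$, $k_3(u,0) = -\tfrac{1}{\sqrt2}\kappa_3(u)$, $k_4(u,0) = \tfrac{1}{\sqrt2}\kappa_2(u)$ relating the $k_i$ to the $\sphere{7}$-curvatures of $\gamma$, the above conditions become $\kappa_1 = \kappa_3 = 0$ and $\kappa_2$ constant. Hence $\gamma$ satisfies $\gamma'' = -\gamma + \kappa_2\, j\gamma'$, so it lies in the $j$-invariant $\sphere{3} \subset \sphere{7}$ spanned by $\set{\gamma, \gamma', j\gamma, j\gamma'}$ — this is the embedding $\iota$ — and is a constant-curvature Legendre curve there. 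Conversely, feeding a great-circle $f$ ($\kappa = 0$) and a constant-curvature Legendre $\gamma$ in such a $\sphere{3}$ back into the computation keeps $k_1 = k_3 = 0$ and $k_4$ constant, so $\nabla h = 0$, giving the stated equivalence.

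Finally, the mean curvature vector is $H = \tfrac12\l(h(U,U) + h(V,V)\r) = \tfrac{k_4}{2} JG(U,V)$, so $M$ is minimal exactly when $k_4 = 0$, that is $\kappa_2 = 0$, that is $\gamma$ is also a great circle; this settles the minimality clause. To close, I would identify these surfaces with the explicit examples: a great-circle $\gamma$ yields the Clifford torus \eqref{eq:EH_tori_fam1_minimal_parallel}, while a nonzero constant curvature $\gamma$ matches \eqref{eq:EH_tori_fam3_parallel}, where $\gamma_\kappa$ was already shown to be a constant-curvature Legendre curve and the factor $e^{-js}$ a great-circle Legendre curve; both families are $\Sp(2)$-orbits, which establishes the extrinsic homogeneity. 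The main obstacle I anticipate is the frame computation of the $(\nabla h)$-components — isolating the clean identity $(\nabla_U h)(V,V) = \tfrac{\kappa}{2} G(U,V)$ and correctly tracking every normal-connection term $\nabla_U^\perp e_j$ — together with the bookkeeping needed to match the reduced curvature data of $\gamma$ against the explicitly parametrised homogeneous examples.
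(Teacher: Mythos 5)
Your proposal is correct and follows essentially the same route as the paper: reduce to the Codazzi classification of \Cref{thm:classification_CL_TR}, impose $\nabla h = 0$ in the Type 2 frame to force $\kappa \equiv 0$, $k_1 = k_3 = 0$ and $k_4$ constant (your frame computations, e.g. $\nabla^\perp_U e_4 = \tfrac12 G(U,V)$ and the $e_4$-coefficient $-\tfrac12 k_3$ of $(\nabla_U h)(U,U)$, check out), then translate this into $\kappa_1 = \kappa_3 = 0$ and $\kappa_2$ constant for $\gamma$, placing it in a $j$-invariant $\sphere{3}$. The only minor difference is the final homogeneity step: the paper argues abstractly from constancy of all frame invariants together with uniqueness of solutions of the resulting ODE system, whereas you identify the surfaces directly with the explicit $\Sp(2)$-orbit examples \eqref{eq:EH_tori_fam1_minimal_parallel} and \eqref{eq:EH_tori_fam3_parallel}; both are valid.
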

\begin{proof}
    We only need to show one direction. Suppose $M$ is parallel, then we find 
    \begin{align*}
        0 &= 2 (\nabla h)(U,V,U) = -k_3 J U + k1 G(U,V), &
        0 &= 2 g( (\nabla h)(U,V,V), G(U,V)) = \kappa,
    \end{align*}
    so that $k_1=k_3=\kappa = 0$. Then for all $s,t$ we have \[ g((\nabla h)(t U+s V, U,U), JG(U,V)) = t U(k_4) + s V(k_4),\] so that $k_4$ has to be constant.
    From the proof of \Cref{thm:classification_CL_TR}, we then have that $F$ has to be of the form
    $F: I \times I \to \sphere{7} \subset \quaternions: (u,v) \mapsto f(v) \gamma(u)$,
    with $f(v)$ a Legendre curve in $\sphere{3}$ with zero curvature, so a great circle, and $\gamma$ a Legendre curve with curvatures $\kappa_1 = \kappa_3 = 0$ and $\kappa_2= \sqrt{2} k_4$ constant. It then follows that $\gamma''(u) = - \gamma(u) + \kappa_2 j \gamma'(u)$, so that this is a Legendre curve in $\sphere{3} \subset \complexs^2$, with $j$ playing the role of complex unit. Hence, there exists an embedding of this $\complexs^2$ into $\quaternions^2$, so that $\iota$ is the induced embedding of $\sphere{3}$ in $\sphere{7}$.  

    From the above considerations, $M$ is minimal and parallel if and only if $k_4 = 0$, which is equivalent to $\gamma$ being a grand circle in $\sphere{3}$. 

    To link these parallel examples with the extrinsically homogeneous ones, note that all parameters are constants in the frame. With the same initial conditions, and uniqueness of the solutions of the resulting differential equations, we find that they are extrinsically homogeneous. 
\end{proof}

%%%%%%%%%%%%%%%%%
\section{Totally umbilical totally real surfaces}
%%%%%%%%%%%%%%%%%

\begin{theorem}\label{thm:classification_totally_umbilical}
    Let $M$ be a totally real surface of nearly Kähler $\CP^3$ that is also totally umbilical. Then, $M$ has to be congruent to example \eqref{eq:ex_sphere_in_Lagrangian}.
\end{theorem}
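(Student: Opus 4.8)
The plan is to start from the frame of \Cref{eq:frame_TR_surface} and translate total umbilicity into the vanishing of most second fundamental form coefficients. Since umbilicity means $h(X,Y) = g(X,Y)H$, comparing with the expansions $h(U,U) = k_1 e_3 + k_2 e_4 + k_3 e_5 + k_4 e_6$, $h(V,V) = m_1 e_3 + m_2 e_4 + m_3 e_5 + m_4 e_6$ and $h(U,V) = k_2 e_3 + m_1 e_4 + k e_5 + m e_6$ forces $k_1 = k_2 = m_1 = m_2 = 0$, $k = m = 0$, and $k_3 = m_3$, $k_4 = m_4$, so that the mean curvature vector is $H = k_3 e_5 + k_4 e_6 \in \Span\set{G(U,V), JG(U,V)}$. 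By \Cref{thm:frame_TR_in_NKCP3} I would then treat the three types separately, aiming to show that Types 0 and 1 admit no umbilical surface while Type 2 is forced to satisfy $\sin\alpha = 0$.

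For Type 1 the structure functions $b_1, b_2, c_3, b_4, c_5$ are all fixed constants. I would substitute these, together with the umbilical data, into the second identity of \Cref{eq:derivatives_P_J0}, namely $(\nabla \Jo)(X,Y) = \tfrac{J + \Jo}{2} G(\tfrac{J - \Jo}{2} X, Y)$, evaluated on the pairs $(U,U), (V,V), (U,V), (V,U)$ and projected onto each frame vector. As anticipated in the Remark following \Cref{thm:frame_TR_in_NKCP3}, I expect this to produce at once an overdetermined, inconsistent system, ruling out Type 1.

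The heart of the argument is Type 0 (with free functions $\lambda, \alpha, \theta$) and Type 2 (with free function $\alpha$). For both I would compute $(\nabla \Jo)(X,Y)$ from the Gauss formula together with the expansions \Cref{eq:J0def_general}, then impose \Cref{eq:derivatives_P_J0}; decomposing in the frame yields a system in $k_3, k_4$, the connection functions $d_1, d_2$, and the derivatives of the type parameters. Supplementing these with the Codazzi equation (using the explicit curvature $R$ of \Cref{sec:description_global}), and especially with the Ricci equation applied to $U, V, G(U,V), JG(U,V)$ — the same combination that produced the positive-definite relation $(1 + 2k_3)^2 + 4 k_4^2 = 0$ in the proof of \Cref{thm:classification_AC_KCP3} — should pin down the remaining freedom. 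In Type 2 I expect this to force $\sin\alpha = 0$; note that $\cos\alpha = 0$ is already excluded, since by \Cref{thm:CL_is_flat_Type2} such surfaces are Codazzi with $m = 1/2 \neq 0$ and hence not umbilical. Once $\sin\alpha = 0$, \Cref{thm:Kahler_TR_AC_type} identifies $M$ as almost complex for Kähler $\CP^3$, and \Cref{thm:classification_AC_KCP3} finishes the proof by yielding congruence to \Cref{eq:ex_sphere_in_Lagrangian}. In Type 0 I expect the combined system to be inconsistent, so that no umbilical surface arises there.

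The main obstacle will be the Type 0 computation: with three free functions $\lambda, \alpha, \theta$ it is the most involved, and care is needed to combine the $\nabla\Jo$, Gauss, Codazzi, and Ricci equations in the right order so that the umbilical constraints $k_3 = m_3$, $k_4 = m_4$ together with $k = m = 0$ cascade into a genuine contradiction rather than merely reducing the number of free parameters. Keeping track of which projections of \Cref{eq:derivatives_P_J0} are independent, and isolating a single sum-of-squares relation analogous to the Type 2 case, will be the delicate part of the argument.
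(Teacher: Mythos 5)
Your reduction of umbilicity to $k_1=k_2=m_1=m_2=k=m=0$, $k_3=m_3$, $k_4=m_4$ is exactly the paper's starting point, and your handling of Types 1 and 2 matches the paper's proof: Type 1 collapses under the projections of \Cref{eq:derivatives_P_J0} (the paper obtains $1=0$ from the $(U,U,JV)$ and $(V,V,JV)$ components), and Type 2 is reduced --- via constancy of $\alpha$ from \Cref{eq:derivatives_P_J0} together with the Ricci equation applied to $U,V,G(U,V),JG(U,V)$ and to $U,V,U,JU$, which give $\sin(2\alpha)=0$ and $k_3=-\tfrac{1}{2}\cos\alpha$ --- to $\sin\alpha=0$, after which \Cref{thm:Kahler_TR_AC_type} and \Cref{thm:classification_AC_KCP3} finish the argument, just as you propose. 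Your shortcut for excluding $\cos\alpha=0$ via \Cref{thm:CL_is_flat_Type2} (a Codazzi surface has $m=\tfrac{1}{2}\neq 0$, contradicting umbilicity) is sound and slightly cleaner than the paper's route.

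The genuine gap is Type 0, which you leave as an ``expected'' inconsistency, and where your anticipated mechanism --- ``a single sum-of-squares relation analogous to the Type 2 case'' --- is not how the contradiction actually arises. The paper splits Type 0 into three sub-cases according to $\lambda$, and only the generic case $\lambda(1-\lambda^2)\neq 0$ is killed by a pointwise algebraic relation: the Ricci equation on $U,V,e_5,e_6$ and on $U,V,U,JU$ yields $\cos(\tfrac{\alpha}{2})=\sin(\tfrac{\alpha}{2})$, which is impossible because $\cos\alpha\neq 0$ in Type 0. The degenerate cases $\lambda^2=1$ and $\lambda=0$ cannot be settled algebraically; there the argument is differential. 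In those cases \Cref{eq:derivatives_P_J0} determines the derivatives of $\alpha$ (for instance $U(\alpha)=\sqrt{2}\cos\theta$, $V(\alpha)=\sqrt{2}\sin\theta$ when $\lambda^2=1$, and $V(\alpha)=-1$ when $\lambda=0$), which shows $\alpha$ cannot be constant, while the Codazzi equation combined with the compatibility condition $[U,V](d_2)=U(V(d_2))-V(U(d_2))$ forces $\alpha$ to be constant --- a contradiction of a fundamentally different nature. So in two of the three sub-cases one must exploit the integrability of the first-order system satisfied by $\alpha$, $d_1$, $d_2$, $k_3$, $k_4$, not an algebraic identity; without this case split on $\lambda$ and the differential compatibility argument, your Type 0 step is not established, and it is precisely the hardest part of the theorem.
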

\begin{proof}
    From the assumption that $M$ is totally umbilic we find that $k_1=k_2=m_1=m_2=k=m=0$ and $m_3=k_3$, $m_4=k_4$. The only variables of the second fundamental form are hence $k_3$ and $k_4$. 

    We now consider the three types separately. If $M$ is of Type 1, it follows from \Cref{eq:derivatives_P_J0} applied to $(U,U,JV)$ and $(V,V,JV)$ that $1=0$, which is a contradiction.  
    If $M$ is of Type 2, we get from \Cref{eq:derivatives_P_J0} that $\alpha$ is constant. Moreover, applying the Ricci equation to $U,V,G(U,V),JG(U,V)$ and $U,V, U, J U$, we have that $\sin(2 \alpha) = 0$ and $k_3 = - \frac{1}{2} \cos\alpha$. From \Cref{eq:derivatives_P_J0}, it then follows that $\sin\alpha = 0$. \Cref{thm:Kahler_TR_AC_type} tells us that $M$ has to be almost complex for Kähler $\CP^3$ and then \Cref{thm:classification_AC_KCP3} tells us it is congruent to example \eqref{eq:ex_sphere_in_Lagrangian}.

    We finish the proof by showing that $M$ cannot be of Type 0. Suppose it is, then we consider three cases. First, we have $\lambda (1- \lambda^2) \neq 0$; second, $\lambda^2 = 1$; and third, $\lambda=0$.
    \begin{enumerate}
        \item[1) ]  In this case, the Ricci equation applied to $U,V, e_5, e_6$ and $U,V, U, JU$ gives that $\cos(\frac{\alpha}{2})=\sin(\frac{\alpha}{2})$, if $\lambda (\lambda^2-1) \neq 0$. This equation is never satisfied as $\cos \alpha \neq 0$. %So, this case does not have any totally umbilic submanifolds.
        \item[2) ] From the Codazzi equation applied to $U,V,U,JU$, we get an expression of $k_3$ in terms of $\alpha$. Then, \Cref{eq:derivatives_P_J0} implies that $U(\alpha) = \sqrt{2} \cos \theta$ and $V(\alpha) = \sqrt{2} \sin \theta$, so that $\alpha$ can never be constant. From the Codazzi equation applied to $U,V,U, e_5$, and the fact that $\alpha$ cannot be constant, we find that $\sin \theta = 0$. However, the Codazzi equation applied to $U,V,V, e_5$ then implies that $\alpha$ has to be constant, which is a contradiction. %Hence, also in this case, there are no totally umbilic examples.
        \item[3) ] From \Cref{eq:derivatives_P_J0}, it follows that $\theta = \pi/2$, $k_3=0$, $k_4 = d_2 (\tan\alpha -\sec\alpha )$, $2 d_1=\frac{\cos (\alpha )}{\sin (\alpha )-1}$, $U(\alpha) = 2 d_2 (\sec\alpha -\tan\alpha)$ and $V(\alpha) = -1$. Notice in particular that $\alpha$ cannot be constant. From the Codazzi equation, we get expressions of $U(d_2)$ and $V(d_2)$ in terms of $d_2$ and $\alpha$, where we use that $(\sec\alpha -\tan\alpha )$ cannot be identically zero, as $M$ cannot be totally geodesic. However, the compatibility equation $[U,V](d_2) = U(V(d_2)) - V(U(d_2))$
        implies that $\alpha$ has to be constant, which is a contradiction. \qedhere
    \end{enumerate}
\end{proof}

\pdfbookmark{Bibliography}{Bibliography}
\printbibliography

\end{document}